\let\blx@rerun@biber\relax
\newcommand{\R}{\mathbb{R}}
\newcommand{\E}{\mathbb{E}}
\newcommand{\N}{\mathbb{N}}
\newtheorem{assumption}{Assumption}
\newtheorem{lemma}{Lemma}
\newtheorem{remark}{Remark}
\newtheorem{theorem}{Theorem}
\newtheorem{corollary}{Corollary}
\newtheorem{definition}{Definition}
\title{Learned Regularization for Inverse
Problems: Insights from a Spectral Model}
\author{Martin Burger\thanks{Helmholtz Imaging, Deutsches Elektronen-Synchrotron DESY, Notkestr. 85, Hamburg, 22607, Germany} \textsuperscript{,}\thanks{Fachbereich Mathematik, Universit\"{a}t Hamburg, Bundesstrasse 55, Hamburg, 20146, Germany}
\and 
Samira Kabri\textsuperscript{$1$}}
\date{Corresponding author:
\href{mailto:samira.kabri@desy.de}{samira.kabri@desy.de}}
\begin{document}

\maketitle
\abstract{\noindent In this chapter we provide a theoretically founded investigation of state-of-the-art learning approaches for inverse problems from the point of view of spectral reconstruction operators. We give an extended definition of regularization methods and their convergence in terms of the underlying data distributions, which paves the way for future theoretical studies.
  
  Based on a simple spectral learning model previously introduced for supervised learning, we investigate some key properties of different learning paradigms for inverse problems, which can be formulated independently of specific architectures. In particular we investigate the regularization properties, bias, and critical dependence on training data distributions. Moreover, our framework allows to highlight and compare the specific behavior of the different paradigms in the infinite-dimensional limit.}
\section{Introduction} 
With the rise of deep learning and generally data-driven methods in all fields of research,  numerous promising advances that employ data-driven strategies for the solution of inverse problems have emerged in recent years.
End-to-end approaches, which aim to learn the entire map from measurements or naive reconstructions to desirable reconstructions (see e.g., \cite{FBPConvNet}) often outperform theoretically founded reconstruction techniques, which do not take into account specific information on the data under consideration. Interestingly, end-to-end approaches still seem to be successful for limited access to the data, as the Data Challenges organized annually by the Finnish Inverse Problems Society\footnote{\url{https://www.fips.fi/challenges.php}} reveal: All teams that won the first place in the last three years enlarged the small set of provided training data by creating synthetic data to then train a neural network either on basic reconstructions or even directly on the measurements (see \cite{Trippe2021} for the 2021 Helsinki Deblur Challenge,  \cite{Germer2022} for the 2022 Helsinki Tomography Challenge and \cite{Denker2023} for the 2023 Kuopio Tomography Challenge). To improve the interpretability and robustness of data-driven reconstructions, many approaches aim to integrate the expressiveness of neural networks into the theoretical framework provided by the classical theory. The concept of Tikhonov regularization is used {e.g.,} in \cite{chung2011designing, bauermeister2020learning} to estimate optimal spectral filters for a given data-set and \cite{alberti21} studies the data-driven optimization of linear but not necessarily diagonal Tikhonov regularizers in an infinite-dimensional setting. The NETT (Network Tikhonov) approach \cite{Li_2020} studies the possibilities to include more general learned regularization functionals into variational optimization problems, which often do not have a closed-form solution and thus have to be solved iteratively. Incorporating learning directly into an iterative optimization scheme by so-called unrolling was successfully used for example in \cite{LearnedPrimalDual, FISTANet}.   Another popular direction is the usage of plug-and-play priors (cf. \cite{venkatakrishnan2013}) or, to put it in more general terms, regularization by denoising (see e.g., \cite{meinhardt2017learning,romano2017little}). Similar to the unrolling approach, the basic idea of plug-and-play priors is to substitute the proximal operator in an iterative scheme by a parametrized map. The main difference, however, is that this map is not optimized in an end-to-end approach that requires explicit knowledge of the forward operator. Instead, it is trained as a denoiser on the desired output space and therefore promises high flexibility between various applications. Another approach in the spirit of plug-and-play would be to apply an already trained denoiser on naive reconstructions as a post-processor. 
\\An important aspect of the reconstruction of unknown quantities from noisy measurements is the stability of the reconstruction operator, meaning that the reconstruction errors can be controlled by the measurement errors in some way. Since naive reconstruction operators obtained by a formal inversion of the measurement operator are often unstable, the concept of regularization (cf. \cite{benning2018modern, engl1996regularization}) is inevitable. Here, the reconstruction operator is not only supposed to be stable, but also to approximate a (generalized) inverse of the forward operator, as the noise corrupting the data vanishes. Despite their importance, these properties have not been well analyzed in the data-driven context so far. Usually, data-driven approaches are derived in a finite dimensional setting, since the data is only accessible in a discretized form. Thus, in many cases, for example in approaches using neural networks with Lipschitz continuous activation functions, the obtained reconstruction operators are Lipschitz continuous by construction. Therefore, to precisely assess their stability, one would have to compute their Lipschitz constant, which is, in general, NP-hard (cf. \cite{virmaux2018lipschitz}). {Even worse we would need to compute the dependence of the Lipschitz constant on the data.}  The infinite-dimensional limit and corresponding regularization properties in function spaces are even less understood. 
\\Furthermore, as most approaches are designed to generate solution operators for problems with a fixed noise model, there are only a few studies on the convergence behavior of data-driven methods in the no-noise limit:
 The convergence of the already mentioned NETT approach is discussed in \cite{Li_2020}. Deploying invertible residual networks (iResNets) to approximate the forward operator well enough to obtain a convergent regularization method is analyzed in \cite{arndt2023invertible} and investigated from a Bayesian perspective in \cite{arndt2023bayesian}. 
The convergence behavior of plug-and-play priors with contractive denoisers is examined in \cite{ebner2022plugandplay}, while \cite{hauptmann2023convergent} shows that plug-and-play reconstruction with linear denoisers can be obtained by spectral filtering of the denoiser. The survey \cite{mukherjee2023learned}  recapitulates various convergence concepts and connects heuristic approaches from practice to mathematical theory.
Although the statistical nature of the noise is taken into account during the training process in most of these works, the convergence itself is shown under the assumption of deterministically bounded noise. Moreover, to the best of our knowledge, the existing convergence studies assume the possibility to manually tune an additional regularization parameter, which is not obtained during a training procedure (we refer to \cite{rodriguez2023learning} for a study on the data-driven optimization of a parameter by empirical risk minimization). 
\\In this chapter, we aim to adapt this classical view of a convergent regularization method to the practical considerations of data-driven strategies. In both settings, a regularization is a family of continuous, parametrized reconstruction operators. However, the idea of a data-driven approach is to determine these parameters by minimizing a risk functional that can depend on the distribution of the  measurements and (for supervised approaches) also on the distribution of the ground truth data. Given that the risk functional and the method to obtain its minimizer are fixed, the user does not have to provide the parameters, but rather the distributions that determine the risk functional, usually in the form of samples. We call these distributions the training data, and, if the noise and measurements can be modeled separately, the training noise. The approach is then called a data-driven regularization, if for any noise model in the real problem there exists a choice of training data and/or training noise such that the reconstruction error converges to zero as the problem noise vanishes. In consideration of plug-and-play approaches, we are particularly interested in ways to choose the training noise only depending on the noise level, such that learned regularizers can be transferred to a broad class of problems. \\
In order to gain further understanding, we want to develop a framework that is applicable to infinite dimensional settings and further includes the case of infinite dimensional white noise in the form of a random process (cf. \cite{FranklinNoise}). Therefore, we propose a new definition of the noise level that is coherent with the considerations for Gaussian white noise made in \cite{KekkonenNoise}. The model we use to understand different training approaches is a spectral regularization framework reminiscent of classical linear regularization methods (cf. \cite{engl1996regularization}) and further developed as a model for supervised learning in \cite{bauermeister2020learning,kabri2024convergent}. With this simple architecture allowing for rather explicit computations, we first revisit supervised learning in our setting. To reduce the dependence of the forward operator during training, we further transfer our results to the plug-and-play setting and derive conditions for convergence under restricted knowledge of the noise model. Our study is completed by the investigation of so-called adversarial regularizers (cf. \cite{lunz18,mukherjee2021learning}), which aim to train the regularizer as a discriminator between the distributions of ground truth and non-regularized data.\\
The remainder of this chapter is organized as follows: 
Based on the framework developed in \cref{sec:theory}, we study the regularization properties of linear spectral reconstruction operators obtained by various data-driven reconstruction strategies from three different paradigms, namely supervised learning (\cref{sec:mse}), regularization by denoising (\cref{sec:denoising}) and adversarial regularization (\cref{sec:adversarial}). We compare and summarize our findings in \cref{sec:comp}.
\section{Convergent Data-Driven Regularization} \label{sec:theory}

In the following, we discuss the basics of convergent data-driven regularization. We start with the basic setup of linear operators we shall use in our study, and then present also the statistical model of the noise. In the last part, we {provide} our novel definitions of data-driven regularization methods, which actually go beyond the specific setup here and even extend to nonlinear inverse problems. 

\subsection{Reconstruction with linear operators and spectral decomposition}
For our studies, we consider a compact linear forward operator $A: X \rightarrow Y$ between  (infinite-dimensional) Hilbert spaces $X$ and $Y$.
For the sake of simplicity, we additionally assume that $A$ has an infinite-dimensional range, such that the {forward problem is ill-posed and} regularization is really needed from a functional-analytic perspective.  Measurements $y \in Y$ that are generated by applying $A$ to ground truth data $x \in X$ are assumed to be corrupted by noise $\epsilon$, leading to 
\begin{align*}
    y = Ax + \epsilon.
\end{align*}
To make explicit computations possible, we resort to linear spectral regularizers as a simple but yet expressive architecture of the reconstruction methods, that have been used in the data-driven setting for example in \cite{arndt2023invertible,arndt2023bayesian,bauermeister2020learning, kabri2024convergent,chung2011designing}. The compactness of $A$ allows for its representation by a discrete singular system, more precisely by a set $\{\sigma_n;\; u_n, v_n\}_{n \in \N}$ where $\{u_n\}_{n\in \N}$ is an orthonormal basis of $N(A)^\perp$, the orthogonal complement of the null-space of $A$ and $\{v_n\}_{n\in \N}$ is an orthonormal basis of $\overline{R(A)}$, the closure of the range of $A$. For each $n$, $\sigma_n > 0$ is the singular value related to $u_n$ and $v_n$ as $Au_n = \sigma_n v_n$. The operator $A$ and its pseudo-inverse $A^{\dagger}$ can thus be expressed as 
\begin{align*}
    Ax = \sum_{n\in \N} \sigma_n \langle x, u_n \rangle v_n \qquad \text{and} \qquad A^{\dagger}y = \sum_{n\in \N} \frac{1}{\sigma_n} \langle y, v_n \rangle u_n.
\end{align*}
In a linear spectral regularizer, the factors $\frac{1}{\sigma_n}$ are substituted by suitable filtering coefficients $g = \{g_n\}_{n\in\N}$, which leads to the reconstruction operator
\begin{align}\label{eq:linspecreg}
    R(y; g) = \sum_{n \in \N} g_n \langle y,v_n\rangle u_n.
\end{align} 
Since this architecture is inherently connected to the spectral decomposition of $A$ into the singular system, it will be called the \textit{spectral architecture} in the following. 
As shown in \cite{ebner2023regularization} and \cite{Hubmer_2022} a generalization of the regularization based on the singular value decomposition can be achieved by using a diagonal frame decomposition.  
For the sake of simplicity, we assume the multiplicity of each singular value to be one and refer to \cite[Remark 2]{kabri2024convergent} for the more general case.
In the following, we analyze different data-driven approaches to determine the coefficients $g_n$ and assume data drawn from a distribution $\pi$ and uncorrelated noise drawn from a distribution $\mu$. The approaches we want to analyze in this chapter yield coefficients $g_n$ that can be described by the singular values $\sigma_n$ and the terms
\begin{align}\label{eq:pidelta}
\Pi_n(\pi) = \E_{x\sim \pi}\left[\langle x, u_n \rangle^2\right] \qquad \text{and} \qquad \Delta_n(\mu) = \E_{\epsilon \sim \mu}\left[\langle \epsilon, v_n \rangle^2\right].
\end{align}   
Roughly speaking, the terms in \eqref{eq:pidelta} describe the variance of data and noise in the direction of the singular functions. 
We will further see that assuming uncorrelated noise yields a Tikhonov-like reconstruction. Such a reconstruction could be equivalently obtained in a variational approach of the form 
\begin{equation}\begin{aligned}\label{eq:tikhRegu}
\operatorname*{arg\,min}_{x' \in X} \frac{1}{2} \|Ax' - y\|^2 +& \frac{1}{2} J_{\lambda}(x')=  \sum_{n \in \N} \frac{\sigma_n}{\sigma_n^2 + \lambda_n} \langle y, v_n \rangle u_n,
\end{aligned}\end{equation}
where the regularization functional 
\begin{align}\label{eq:Jlambda}
    J_{\lambda}(x) = \sum_{n \in \N} \lambda_n \langle x,u_n \rangle^2,
\end{align}
is parametrized by $\lambda = \{\lambda_n\}_{n\in\N}$ with $\lambda_n \geq 0$.

 We mainly focus on the continuity and the convergence behavior of the data-driven approaches as the underlying noise distribution changes.
Therefore, we assume the forward operator, and thus its singular system, to be fixed throughout this chapter. For the distribution of the training data, $\pi$, we make the following assumptions: 
\begin{assumption}\label{ass:data}
    \item The covariance operator of the data distribution  is assumed to be a trace-class operator, which means in particular that 
        \begin{align*} \sum_{n\in \N} \Pi_n(\pi ) = \sum_{n\in \N} \E_{x \sim \pi }\left[ \langle x, u_n \rangle^2\right] = \E_{x \sim \pi }\left [\sum_{n\in \N} \langle x, u_n \rangle^2\right] = \E_{x \sim \pi }(\| x \|^2) < \infty.\end{align*} 
Additionally, we assume
$\Pi_n(\pi ) > 0$ 
for all $n \in \N$.
\end{assumption}
By choosing the specific architecture \eqref{eq:tikhRegu} we limit ourselves to approximations of the pseudo-inverse $A^\dagger$, which achieves unique solutions by neglecting the null-space components of possible reconstructions. Therefore, the expected reconstruction error for uncorrupted measurements is bounded from below by the expected data variance in the null-space of the operator, as
\begin{align}\label{eq:bias_with0}
    \mathbb{E}_{x\sim \pi }\left [ \| x - R(Ax;\,g)\|^2 \right]  =\mathbb{E}_{x \sim \pi } \left [ \|x_0\|^2\right] + 
    \sum_{n\in \N} (1-\sigma_n g_n)^2 \Pi_n(\pi ),
\end{align}
where $x_0$ denotes the projection of $x$ on the null-space $N(A)$. The recovery of a non-trivial null-space component can still be of great importance, for example in medical imaging with limited data. Examples include limited angle tomography (see for example \cite{bubba2021deep, Liu_2023_ICCV}) or magnetic resonance imaging (see for example \cite{hyun2018deep}). For a study on more general data-driven Tikhonov regularizers in the infinite dimensional setting, which allow for a bias and are not necessarily diagonal with respect to the singular system of the forward operator, we refer to \cite{alberti21}. A general framework to approximate null-space components with deep neural networks in a second step after applying a conventional reconstruction method is introduced in  \cite{schwab2019deep}.\\
Although we do not take into account the quantity $\mathbb{E}_{x \sim \pi } \left [ \|x_0\|^2\right]$ in this chapter, we are still interested in the remaining part of the error. We therefore define the bias of the reconstruction approach $R(\cdot;\,g)$ based on the data distribution $\pi $ as
\begin{align*}
e_0 =
    \sum_{n\in \N} (1-\sigma_n g_n)^2 \Pi_n(\pi )= \sum_{n\in \N} \left ( \frac{\lambda_n}{ \sigma_n^2 + \lambda_n}\right)^2 \, \Pi_n(\pi ).
\end{align*}
While we cannot expect the bias of a continuous reconstruction approach to be zero, it is still desirable that it converges to zero for a vanishing corruption of the measurements.  
\subsection{Statistical noise level}
Apart from being stable, a regularization method has to converge to a generalized inverse as the noise level tends to zero. In case of deterministic noise $\epsilon$, this noise level is often chosen as the norm of the noise, i.e., $\delta = \|\epsilon\|$.
This approach can be generalized to the statistical setting (cf. \cite{benning2018modern}) by defining the noise level corresponding to a random variable $\epsilon$ drawn from a distribution $\mu$ as $\delta^2 = \E_{\epsilon\sim \mu} \left[\|\epsilon\|^2\right]$.
Of course, this definition only makes sense if $\|\epsilon\| < \infty$ almost surely, which is not the case for $\|\cdot\|$ being the $L^2$-norm and Gaussian white noise in an infinite dimensional setting, as it is pointed out for example in \cite{KekkonenNoise}. There, Gaussian white noise of level $\delta$ is modelled by $\delta \cdot\epsilon$, where $\epsilon \sim \mathcal{N}(0,\operatorname{Id})$ is a realization of a normalized Gaussian random process (cf. \cite{FranklinNoise}). Generalizing this perspective would lead to $\delta^2 = \|\operatorname{Cov}_\mu\|$
where $\operatorname{Cov}_{\mu}: Y \rightarrow Y$ is the covariance operator of the noise distribution, i.e.,
\begin{align*}
    \langle \operatorname{Cov}_{\mu} v, w \rangle = \E_{\epsilon \sim \mu}\left[\langle \epsilon, v \rangle \langle \epsilon, w \rangle\right]
\end{align*}
for all $v,w \in Y$.
In this chapter, we focus on the spectral architecture, which projects the noise to the singular functions of the operator. Therefore, we employ the following weaker definition of the noise level, 
\begin{align}\label{eq:probSpecNoise}
    \boldsymbol{\delta}(\mu) = \sqrt{ \sup_{n \in \N} \Delta_n(\mu)},
\end{align}
where we define $\Delta_n(\mu)$ as in \eqref{eq:pidelta}. 
In the special case that $\operatorname{Cov}_\mu$ is diagonal with respect to the singular functions $\left\{ v_n\right\}_{n \in \N}$, e.g., for white noise, we have $\|\operatorname{Cov}_\mu\| = \sup_{n \in \N} \Delta_n(\mu)$.
We further note that with this definition we still get $\boldsymbol{\delta}(\mu) \rightarrow 0$ as $ \|\operatorname{Cov}_\mu\| \rightarrow 0$, while the opposite direction is not true in general.

\subsection{Learned regularization methods}

While classical methods control the influence of the regularization functional by the regularization parameter, we predominantly concentrate on the case where the optimal relation between the data fidelity term and the regularization term is learned from the data. In \cite{benning2018modern} a definition of a regularization method has been given, which captures more general aspects appearing in data-driven regularizations, but is still not completely suited for covering all aspects we are interested in (and which are appearing in different learning-based approaches). To clarify our understanding of regularization in this sense, we first restate the definitions of a convergent regularization given in \cite{benning2018modern}. These work for general nonlinear inverse problems with a forward operator $A$ between metric spaces $X$ and $Y$. Given some distance measure $F$ on $Y$ and some possible multivalued selection operator $S$ on $X$, we define a solution of the inverse problem $A(x) =y$ as any
$$ x^* \in S\left(\text{arg}\min_{x \in X} F(A(x),y) \right).$$
Based on these considerations, we define the set-valued solution operator \begin{align*}\mathcal{S}: y \mapsto S\left(\text{arg}\min_{x \in X} F(A(x), y) \right),\end{align*}
which can be seen as a non-linear generalization of the pseudo-inverse.
Accordingly, a set-valued notion of stability is required, which is achieved with the help of Kuratowski convergence. The Kuratowski limit inferior of a sequence of sets $S_n \subset X$ on the metric space $(X,d)$ is defined as 
\begin{align*}
    K-\operatorname*{lim\,inf}_{n \rightarrow \infty} S_n = \left\{x \in X \, \Bigl |\, \operatorname*{lim\,sup}_{n \rightarrow \infty} d(x,S_n) = 0\right\},
\end{align*}
where the distance between an element $x \in X$ and a set $S \subset X$ is defined by
\begin{align*}
    d(x,S) = \inf_{y \in S} d(x,y).
\end{align*}
Using this definition, we call a multivalued operator $R: Y \rightrightarrows X$ \textit{stable}, if it holds for any sequence $\{y_n\}_{n \in \N} \subset Y$, $y_n \rightarrow y \in Y$ that
\begin{align}\label{eq:setstable}
    \emptyset \neq  K-\operatorname*{lim\,inf}_{n \rightarrow \infty} R(y_n) \subset R(y).
\end{align}
\begin{remark}
    In the case of a single-valued operator $R: Y \rightarrow X$, the above notion of stability reduces to continuity: For a convergent sequence $y_n \rightarrow y$ of elements $y,y_n \in Y$ and any $x \in  K-\operatorname*{lim\,inf}_{n \rightarrow \infty} \{R(y_n)\}$ we see that 
    \begin{align*}
       0 = \operatorname*{lim\,sup}_{n \rightarrow \infty} d(x,R(y_n)) \geq \operatorname*{lim\,inf}_{n \rightarrow \infty} d(x,R(y_n)) \geq 0
    \end{align*}
    and therefore, $\lim_{n \rightarrow \infty} d(x, R(y_n)) = 0$. Thus, we derive 
    \begin{align*}
        \emptyset \neq K-\operatorname*{lim\,inf}_{n \rightarrow \infty} \{R(y_n)\} \subset \{R(y)\}\Longleftrightarrow \lim_{n \rightarrow \infty} R(y_n) = R(y).
    \end{align*}
\end{remark}
A regularization in the classical sense is a family of (possibly multivalued) stable operators $R_{\alpha}: Y \rightrightarrows X$, parametrized by $\alpha \in \mathcal{A}$, where $\mathcal{A}$ denotes the set of possible parameters. A regularization is called \textit{convergent}, if there exists a parameter-choice rule $\boldsymbol{\alpha}: (0, + \infty) \times Y \rightarrow \mathcal{A}$, such that 
\begin{align*}
\lim_{\delta \rightarrow 0} \sup\left\{ d_H\left(\mathcal{S}(y), R_{\boldsymbol{\alpha}(\delta, y^{\delta})}(y^\delta)\right) \, \Bigl | \, F(y^{\delta}, y) \leq \delta\right\} = 0
\end{align*}
 for every $y \in  \text{domain}(\mathcal{S})$. To measure the distance between the set of solutions and the set of reconstructions in the metric space $(X,d)$, we choose the Hausdorff metric $d_H$.

\noindent Our goal is now to transfer this understanding of convergent regularizations to a data-driven setting, where the user does not choose the parameters directly, but rather the learning method that determines the optimal parameters. We are interested in the following ingredients:
\begin{itemize}

\item An ideal noise distribution $\nu$ from some family $\Phi$. The latter could consist of rescalings of a single  noise distribution if the noise statistics is known precisely (e.g. Gaussians with mean zero and rescaled covariance operator), some exponential family  or a more general family of noise distributions.

\item A noise level $\boldsymbol{\delta}(\nu)$ as discussed in the previous section (or equally well any other suitable definition of a noise level for a given distribution $\nu$).

\item A training noise model $\mu$ for the noise, which is again an element of a family of distributions $\Psi$ on $Y$  (like $\Phi$). Ideally $\mu=\nu$ or at least an empirical distribution sampled from $\nu$, but there are several reasons to incorporate a change in this distribution. As we shall see below, cases like plug-and-play priors even need a distribution $\mu$ defined on a different space than $\nu$.

\item A prior model $\pi$ for the solutions $x$ of the inverse problems, which is an element of a family $\Pi$ of distributions on $X$.

\end{itemize}
Let us mention that the noise level, the training noise model, and the prior model determine the data-driven regularization in an implicit way. The usual unknowns of a learned method like the parameters of a deep network are determined from a well-specified strategy once the above ingredients are specified. E.g., in a supervised learning approach, the regularization operator at some noise level is learned directly from pairs $(x,Ax+\epsilon)$, $x\sim \pi$, $\epsilon \sim \mu$.  Note that in practice, the definition of the regularization operator depending on $(\mu,\pi)$ may be non-unique due to local minima in the training process or stochastic optimization or initialization. These aspects are however beyond the scope of this chapter.
Now we construct a multivalued operator $R$ from $Y$ to the power set of $X$, parametrized by $\mu$, and $\pi$, which we reflect in the notation as $R_{\mu,\pi}$.

\begin{definition} A family of operators $R_{\mu,\pi}$ is called a (data-driven) family of regularization operators if $R_{\mu,\pi}$ is stable in the sense of \eqref{eq:setstable} for each  $\mu \in \Psi$, $\pi \in \Pi$.
\end{definition}

As a next step, we can define a notion of convergence of the regularization method in a statistical sense.
\begin{definition} A family of regularization operators $R_{\mu,\pi}$ is called convergent if
there exists a parameter choice rule $\mu(\delta,\nu)$ and $\pi(\delta,\nu)$ such that 
\begin{align*}  \lim_{\delta \rightarrow 0} \sup \left \{\E_{\epsilon \sim \nu^\delta} \left [ d_H\left(\mathcal{S}(A(x) ),R_{\mu(\delta,\nu^\delta),\pi(\delta,\nu^\delta)} (A(x) +\epsilon) \right)\right] \, \Bigl | \, \boldsymbol{\delta}(\nu^\delta) \leq \delta \right \} =  0 \end{align*}
for each $x \in X$.
We call the regularization operators convergent over a prior distribution $\pi^*$ if 
\begin{align*} \lim_{\delta \rightarrow 0} \sup \left \{\E_{x \sim \pi^*,\epsilon \sim \nu^\delta} \left [ d_H\left(\mathcal{S}(A(x) ),R_{\mu(\delta,\nu^\delta),\pi(\delta,\nu^\delta)} (A(x) +\epsilon) \right)\right] \, \Bigl | \, \boldsymbol{\delta}(\nu^\delta) \leq \delta \right \} =  0.
 \end{align*}
\end{definition}

Let us make some remarks on the above definition: First of all we concentrate in our notation on the dependence on $\delta$ and $\nu^\delta$, since we vary those when we study convergence. Obviously, there are further dependencies we do not highlight in our notation, like the natural dependence of $\pi$ on $\pi^*$.
Secondly, our setup redefines the common parameter-choice rules for regularization methods as depending on $\delta$ and $\nu^\delta$. We obviously define a parameter-choice rule as an {\em a-priori} choice if it only depends on $\delta$ and as {\em a-posteriori} if it also depends on $\nu^\delta$. Note that an error-free parameter choice for $\pi$ might still yield a convergent regularization method, as we shall see below. In particular, in view of convergence over a prior distribution the ideal choice $\pi = \pi^*$ will typically be successful (and in some cases be possible even in practice). However, a fully error-free parameter choice for $\mu$ and $\pi$ can still not result in a convergent regularization method.

Let us specialize the definitions for a regularization method for a linear forward problem $A:X \rightarrow Y$ on Hilbert spaces $X$ and $Y$, where the natural metric is induced by the squared norm (respectively the scalar product) on the Hilbert space $X$. The natural selection operator is the projection $P$ on $N(A)^\perp$, which effectively leads to the 
pseudo-inverse $A^\dagger y$. Then the method is convergent if 
\begin{align*}\sup_{\nu^\delta \in \Phi, \boldsymbol{\delta}(\nu^\delta) \leq \delta}
\E_{\epsilon \sim \nu^{\delta} } \left[ \Vert R_{\mu(\delta,\nu^\delta),\pi(\delta,\nu^\delta)} (Ax+\epsilon) - P x \Vert^2\right]\rightarrow 0, \end{align*}
for each $x \in X$. Note that since this distance is indifferent to the nullspace component of $x$, we can equally well define convergence as
\begin{align}\label{eq:convergence} \sup_{\nu^\delta \in \Phi, \boldsymbol{\delta}(\nu^\delta) \leq \delta}
\E_{\epsilon \sim \nu^{\delta} }  \left[\Vert R_{\mu(\delta,\nu^\delta),\pi(\delta,\nu^\delta)} (Ax+\epsilon) - x \Vert^2 \right ]\rightarrow 0, \end{align}
for all $x \in N(A)^\perp$. 
Convergence over a prior distribution is then characterized as 
\begin{align*}\sup_{\nu^\delta \in \Phi, \boldsymbol{\delta}(\nu^\delta) \leq \delta}
\E_{x \sim \pi^*} \E_{\epsilon \sim \nu^{\delta} } \left [\Vert R_{\mu(\delta,\nu^\delta),\pi(\delta,\nu^\delta)} (Ax+\epsilon) - x \Vert^2 \right] \rightarrow 0. \end{align*}

\section{Supervised Learning of Spectral Regularization}\label{sec:mse}
In this section, we recapitulate the convergence result from \cite{kabri2024convergent} in the sense of equation \eqref{eq:convergence} and extend it by possible a-priori parameter-choice rules $\mu(\delta)$ for different families of noise distributions. 
As shown for example in \cite{chung2011designing}, the choice of $g$ that minimizes the mean squared error, i.e., 
\begin{align*}
    g^{\text{mse}}(\mu,\pi) = \operatorname*{arg\,min}_{g: \N \rightarrow \R} \E_{x \sim \pi,y \sim \mu}\left[\|R(y, g) - x\|^2\right]
\end{align*}
for noise that is uncorrelated to the data is given by the coefficients
\begin{align}\label{eq:optig}
    g^{\text{mse}}_n(\mu, \pi) = \frac{\sigma_n}{\sigma_n^2 + \frac{\Delta_n(\mu)}{\Pi_n(\pi)}},
\end{align}
or, equivalently using formulation \eqref{eq:tikhRegu},
\begin{align*}
    \lambda^{\text{mse}}_n(\mu, \pi) = \frac{\Delta_n(\mu)}{\Pi_n(\pi)}.
\end{align*}
Due to \cref{ass:data}, the optimal choice of  coefficients is unique and well-defined. We further see that in the spectral framework, the optimal coefficients do not depend on $\mu$ and $\pi$ separately, but rather on their ratio in the directions of each singular value. Thus, unless indicated otherwise,  we choose the error-free parameter choice $\pi(\delta, \nu) = \pi^*$ and write $\Pi_n^* = \Pi_n(\pi^*)$. The optimal spectral reconstruction operator with respect to the mean squared error for training noise drawn from $\mu$ is then denoted by $R^{\text{mse}}_{\mu}$, leading to 
\begin{align*}
    R^{\text{mse}}_{\mu} y = \sum_{n \in \N} g^{\text{mse}}_n(\mu) \langle y , v_n\rangle u_n,
\end{align*}
with $g^{\text{mse}}_n(\mu) = g^{\text{mse}}_n(\mu, \pi^*)$.
\begin{remark}
    In case of a data distribution with covariance operator 
    \begin{align*}
        \operatorname{Cov}_{\pi} x= \sum_{n \in \N} \Pi_n \langle x, u_n \rangle u_n
    \end{align*}
    with eigenvalues $\Pi_n >0$ and a noise distribution with covariance operator 
    \begin{align*}
        \operatorname{Cov}_{\mu} y= \sum_{n \in \N} \Delta_n \langle y, v_n \rangle v_n
    \end{align*}
    with eigenvalues $\Delta_n > 0$,
    the optimal linear spectral regularizer coincides with the optimal linear regularizer (cf. \cite{alberti21}).
\end{remark}
The following Lemma shows that the reconstruction operators obtained in the described way are continuous under reasonable conditions on noise and data.
\begin{lemma}[Continuity]\label{lem:cont}
The reconstruction operator $R^{\text{mse}}_{\mu}: Y \rightarrow X$ is continuous if and only if there exists a constant $c > 0$ such that
\begin{align*}
    \Delta_n(\mu) \geq c \,\sigma_n \Pi_n^*
\end{align*}
for almost all $n \in \N$. In particular, this condition is fulfilled if there exists $c > 0$ and $n_0 \in \N$ such that \begin{align*}\Delta_n(\mu) \geq c \ \Pi_n^*\end{align*}
for all $n \geq n_0$.
\end{lemma}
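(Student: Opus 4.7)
The plan is to exploit that $R^{\text{mse}}_\mu$ is diagonal with respect to the singular system: mapping $v_n \mapsto g^{\text{mse}}_n(\mu)\, u_n$, it extends to a bounded linear operator from $Y$ to $X$ if and only if the sequence $\{g^{\text{mse}}_n(\mu)\}_n$ is bounded, with operator norm $\sup_n g^{\text{mse}}_n(\mu)$. So the task reduces to characterizing when the coefficients
\begin{align*}
g^{\text{mse}}_n(\mu) = \frac{\sigma_n}{\sigma_n^2 + \Delta_n(\mu)/\Pi_n^*}
\end{align*}
are uniformly bounded in $n$. The key analytic input is that $A$ is compact with infinite-dimensional range, hence $\sigma_n \to 0$.

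For sufficiency, I would suppose $\Delta_n(\mu) \geq c\,\sigma_n \Pi_n^*$ for all $n \geq n_0$. Plugging into the formula, for such $n$ one immediately obtains
\begin{align*}
g^{\text{mse}}_n(\mu) \;\leq\; \frac{\sigma_n}{\sigma_n^2 + c\,\sigma_n} \;=\; \frac{1}{\sigma_n + c} \;\leq\; \frac{1}{c}.
\end{align*}
The finitely many exceptional indices $n < n_0$ contribute only finitely many terms, each bounded since $\sigma_n > 0$ and (by \cref{ass:data}) $\Pi_n^* > 0$, so $\sup_n g^{\text{mse}}_n(\mu) < \infty$.

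For necessity, suppose $g^{\text{mse}}_n(\mu) \leq M$ for some $M > 0$ and all $n$. Rearranging the inequality $\sigma_n \leq M\sigma_n^2 + M\Delta_n(\mu)/\Pi_n^*$ yields
\begin{align*}
\frac{\Delta_n(\mu)}{\Pi_n^*} \;\geq\; \frac{\sigma_n}{M} - \sigma_n^2 \;=\; \frac{\sigma_n(1 - M\sigma_n)}{M}.
\end{align*}
Since $\sigma_n \to 0$, eventually $M\sigma_n \leq 1/2$, so $\Delta_n(\mu) \geq \frac{\sigma_n \Pi_n^*}{2M}$ for all sufficiently large $n$, giving the claimed condition with $c = 1/(2M)$. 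Finally, the strengthened sufficient condition follows by a trivial comparison: if $\Delta_n(\mu) \geq c\,\Pi_n^*$ for $n \geq n_0$, then since $\sigma_n$ is bounded by $\|A\|$, we get $\Delta_n(\mu) \geq (c/\|A\|)\,\sigma_n \Pi_n^*$, reducing to the main case.

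The only subtle point will be the precise handling of the phrase ``for almost all $n \in \N$'' (meaning all but finitely many), ensuring the finite set of exceptional indices does not destroy boundedness; this is clean once one invokes $\Pi_n^* > 0$ from \cref{ass:data} and the fact that there are only finitely many such $n$. Beyond that, the argument is essentially a direct inequality manipulation, so I do not anticipate a substantial technical obstacle.
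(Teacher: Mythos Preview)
Your proposal is correct and follows essentially the same approach as the paper: both reduce continuity to boundedness of $\sup_n g^{\text{mse}}_n(\mu)$ and then manipulate the inequality $\sigma_n + \Delta_n(\mu)/(\sigma_n\Pi_n^*) \geq c$ using $\sigma_n \to 0$. The paper phrases the equivalence slightly more compactly via the reciprocal $1/g^{\text{mse}}_n(\mu)$ rather than splitting into explicit sufficiency and necessity arguments, but the content is the same.
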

\begin{proof}
    Fix $\mu$ such that $R^{\text{mse}}_{\mu}: Y \rightarrow X$ is continuous. Since $R^{\text{mse}}_{\mu}$ is linear, this is equivalent to
    \begin{align*}
        \|R^{\text{mse}}_{\mu}\| = \sup_{n \in \N} g^{\text{mse}}_n(\mu) < \infty, 
    \end{align*}
    where in this case $\|\cdot\|$ denotes the  operator norm.
    Inserting the definition of $g^{\text{mse}}_n(\mu)$ we see that this is equivalent to 
    \begin{align*}\inf_{n \in \N} \sigma_n + \frac{\Delta_n(\mu)}{\sigma_n\Pi_n^*} > 0.\end{align*}
This holds if and only if there exists $c > 0$ such that for every $n \in \N$
\begin{align*}
    \frac{\Delta_n(\mu)}{\sigma_n\Pi_n^*} \geq c - \sigma_n.
\end{align*}
Since the sequence of $\sigma_n$ converges to zero, this is equivalent to the first inequality. This also yields the second inequality, as $\sigma_1 \geq \sigma_n$ for all $n \in \N$.
\end{proof}
The foundation of our further studies is the convergence result for learned spectral regularizations derived in \cite[Theorem 3]{kabri2024convergent}, which is restated in the following.
\begin{theorem}[Convergence]\label{thm:convergence}
    The family of learned spectral reconstruction operators $\left\{ R^{\text{mse}}_\mu \right\}_{\mu \in \Psi}$ with
    \begin{align*}
        \Psi= \left\{ \mu \ \Bigl |\  R^{\text{mse}}_{\mu}: Y\rightarrow X \text{ is continuous}\right\}
    \end{align*}
    is a convergent data-driven regularization for $A^\dagger$ and in particular it holds for any $x \in N(A)^\perp$ that
    \begin{align*} \sup_{\nu^\delta \in \Psi, \boldsymbol{\delta}(\nu^\delta) \leq \delta}
\E_{\epsilon \sim \nu^\delta }  \left[\Vert R_{\nu^\delta,\pi(\delta,\nu^\delta)} (Ax+\epsilon) - x \Vert^2 \right ]\longrightarrow 0 \end{align*}
as $\delta \rightarrow 0$.
\end{theorem}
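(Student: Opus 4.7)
The plan is to exploit the spectral architecture to decouple the error across singular components, then apply dominated convergence in $n$ to push the supremum over $\nu^\delta$ through an infinite sum. Since $x\in N(A)^\perp$, we have $\langle Ax+\epsilon, v_n\rangle = \sigma_n\langle x, u_n\rangle + \langle\epsilon, v_n\rangle$, and expanding $R^{\text{mse}}_{\mu}(Ax+\epsilon)-x$ in the orthonormal basis $\{u_n\}$ gives, after taking the squared norm and expectation (the cross term vanishing because the optimality of \eqref{eq:optig} is predicated on $\E[\langle\epsilon, v_n\rangle]=0$),
\begin{align*}
\E_{\epsilon\sim\mu}\left[\|R^{\text{mse}}_{\mu}(Ax+\epsilon)-x\|^2\right] = \sum_{n\in\N}\left[(\sigma_n g^{\text{mse}}_n(\mu)-1)^2\langle x, u_n\rangle^2 + g^{\text{mse}}_n(\mu)^2 \,\Delta_n(\mu)\right].
\end{align*}

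Substituting the explicit coefficients \eqref{eq:optig} and setting $\lambda_n := \Delta_n(\mu)/\Pi_n^*$, I would rewrite this as the familiar Tikhonov bias-plus-variance expression
\begin{align*}
\sum_{n\in\N}\frac{\lambda_n^2\,\langle x, u_n\rangle^2}{(\sigma_n^2+\lambda_n)^2} + \sum_{n\in\N}\frac{\sigma_n^2\,\lambda_n\,\Pi_n^*}{(\sigma_n^2+\lambda_n)^2}.
\end{align*}
The elementary bounds $\lambda_n^2/(\sigma_n^2+\lambda_n)^2\le 1$ and, by AM--GM, $\sigma_n^2\lambda_n/(\sigma_n^2+\lambda_n)^2\le 1/4$ dominate the two summands by $\langle x, u_n\rangle^2$ and $\Pi_n^*/4$ respectively; both are summable, the first because $x\in X$ and the second by \cref{ass:data}. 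The hypothesis $\boldsymbol{\delta}(\nu^\delta)\le\delta$ translates into the pointwise estimate $\lambda_n(\nu^\delta)\le\delta^2/\Pi_n^*$, and since the first summand is monotone increasing in $\lambda_n$ and the second is monotone on $[0,\sigma_n^2]$ (an interval that, for any fixed $n$, eventually contains $\delta^2/\Pi_n^*$), I can replace $\lambda_n$ by the worst case $\delta^2/\Pi_n^*$ in each term to obtain an upper bound independent of $\nu^\delta$.

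For every fixed $n$ this upper-bound summand tends to zero as $\delta\to 0$ (numerators vanish while denominators converge to $\sigma_n^4>0$), so dominated convergence drives the supremized sum to zero, concluding the argument. The main obstacle is precisely this commutation of supremum over $\nu^\delta$ with the infinite sum: the bound $\Delta_n(\nu^\delta)\le\delta^2$ is uniform in $n$, but converting to $\lambda_n$ divides by $\Pi_n^*$, which may decay arbitrarily fast, so pointwise-in-$\delta$ smallness of each summand does not by itself yield uniform control. What rescues the argument is exactly the trace-class assumption on the prior covariance, which supplies the summable majorant $\Pi_n^*/4$ for the variance contribution and thereby legitimizes the swap of supremum and sum.
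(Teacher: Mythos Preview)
Your approach is the same as the paper's---bias--variance split in the singular basis, with the summable dominants $\langle x,u_n\rangle^2$ and $\Pi_n^*/4$ coming from $x\in X$ and \cref{ass:data}---only packaged as dominated convergence instead of the paper's explicit $\varepsilon$--$N$ cut.

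There is one genuine slip. The substitution $\lambda_n\mapsto\delta^2/\Pi_n^*$ gives an upper bound for the bias term (that factor is increasing on all of $[0,\infty)$), but \emph{not} for the variance term. You correctly note that $\lambda\mapsto\sigma_n^2\lambda/(\sigma_n^2+\lambda)^2$ is monotone only on $[0,\sigma_n^2]$ and that ``for any fixed $n$, eventually'' $\delta^2/\Pi_n^*$ lies there; but this is not uniform in $n$. Since $\sigma_n^2\Pi_n^*\to 0$, at \emph{every} fixed $\delta>0$ we have $\delta^2/\Pi_n^*>\sigma_n^2$ for all sufficiently large $n$, and for such $n$ the admissible choice $\lambda_n=\sigma_n^2$ attains the maximal value $\Pi_n^*/4$, strictly larger than what you obtain by plugging in $\delta^2/\Pi_n^*$. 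Hence your proposed termwise bound is not actually an upper bound for the supremum over $\nu^\delta$, and the inequality you need before invoking dominated convergence fails as written. The repair is painless and stays inside your scheme: bound the variance summand instead by
\[
\frac{\sigma_n^2\lambda_n}{(\sigma_n^2+\lambda_n)^2}\,\Pi_n^*\;\le\;\min\!\left(\frac{\Pi_n^*}{4},\ \frac{\lambda_n\Pi_n^*}{\sigma_n^2}\right)\;\le\;\min\!\left(\frac{\Pi_n^*}{4},\ \frac{\delta^2}{\sigma_n^2}\right),
\]
which is dominated by $\Pi_n^*/4$ and tends to $0$ for each fixed $n$---exactly what dominated convergence requires. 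The paper sidesteps the monotonicity issue entirely by splitting at a finite $N$: the tail is absorbed by the majorant alone, and the finite head is bounded crudely via $\Delta_n(\nu)\le\delta^2$.
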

\begin{proof}
    To show convergence for fixed $x \in N(A)^\perp$, we first write down the mean squared error for $\delta > 0$ obtained for fixed problem noise $\nu$ with $\boldsymbol{\delta}(\nu) \leq \delta$ and any training noise $\mu \in \Psi$ as 
\begin{equation}\begin{aligned}\label{eq:errdecomp}
\E_{\epsilon \sim \nu} &\left[ \left\| R^{\text{mse}}_{\mu}(Ax+\epsilon) - x\right\|^2\right]  \\ &
= \sum_{n \in \N} (1-\sigma_n g^{\text{mse}}_n(\mu))^2 \langle x, u_n \rangle^2 + g^{\text{mse}}_n(\mu)^2\Delta_n(\nu) = \\
&=\sum_{n \in \N} \frac{\Delta_n(\mu)^2}{\left( \Pi^*_n \sigma_n^2+ \Delta_n(\mu)\right)^2} \langle x, u_n \rangle^2 + \frac{\Pi^*_n\sigma_n^2\Delta_n(\nu)}{\left(\Pi^*_n\sigma_n^2 + \Delta_n(\mu)\right)^2} \Pi^*_n,
\end{aligned}\end{equation}
and see that the error can be decomposed in a part that is caused by the reconstruction error for uncorrupted data (the first summand) and a part that is caused by the noise (the second summand). We see that for all $n \in \N$ 
\begin{align*}
\frac{\Delta_n(\mu)^2}{\left( \Pi^*_n \sigma_n^2+ \Delta_n(\mu)\right)^2} \leq 1 
\end{align*}
which means that we can bound the series over the first summand independently of the choice of $\mu$ since $\{\langle x, u_n \rangle\}_{n \in \N} \in l^2$ for any $x \in N(A)^\perp$. Choosing the training noise $\mu = \nu$, we can further bound the series over the second summand as 
\begin{align}\label{eq:tbound}
    \frac{\Pi^*_n\sigma_n^2\Delta_n(\nu)}{\left(\Pi^*_n\sigma_n^2 + \Delta_n(\nu)\right)^2} \leq \frac{\Pi^*_n\sigma_n^2\Delta_n(\nu)}{2 \Pi^*_n\sigma_n^2\Delta_n(\nu)} = \frac{1}{2}
\end{align}
and $\sum_{n \in \N} \Pi^*_n < \infty$ by Assumption \ref{ass:data}. Therefore, for any given $\varepsilon > 0$ we can choose $N > 0$ independently of $\nu$ such that
\begin{align*}
    \sum_{n \geq N} \frac{\Delta_n(\nu)^2}{\left( \Pi^*_n \sigma_n^2+ \Delta_n(\nu)\right)^2} \langle x, u_n \rangle^2 + \frac{\Pi^*_n\sigma_n^2\Delta_n(\nu)}{\left(\Pi^*_n\sigma_n^2 + \Delta_n(\nu)\right)^2} \Pi^*_n  \leq \frac{\varepsilon}{2}.
\end{align*}
It remains to show that we can control the remaining finite sum by the choice of the noise level $\delta$. Using that $\Delta_n(\nu) \leq \boldsymbol{\delta}(\nu)^2 \leq \delta^2$ by definition, we get that 
\begin{align*}
\sum_{n \leq N} \frac{\Delta_n(\nu)^2}{\left( \Pi^*_n \sigma_n^2+ \Delta_n(\nu)\right)^2} &\langle x, u_n \rangle^2 + \frac{\Pi^*_n\sigma_n^2\Delta_n(\nu)}{\left(\Pi^*_n\sigma_n^2 + \Delta_n(\nu)\right)^2} \Pi^*_n  \leq  \\ & \leq \delta^2\cdot\left(\frac{\|x\|^2}{\min_{n \leq N} \left\{\Pi^*_n \sigma_n^2\right\}} + \frac{ N}{\sigma_N^2}\right) \leq \frac{\varepsilon}{2}
\end{align*}
as long as $\delta$ fulfills
\begin{align*}
    \delta \leq \frac{\sqrt{\varepsilon}}{\sqrt{2 \cdot \left(\frac{\|x\|^2}{\min_{n \leq N} \left\{\Pi^*_n \sigma_n^2\right\}} + \frac{N}{\sigma_N^2}\right)}}.
\end{align*}
Since $N$ can be chosen independently of $\nu$ so can $\delta$ and we derive the uniform convergence result on the whole set $\Psi$.
\end{proof}
Based on the above result, we investigate possible a-priori parameter-choice rules and reveal the desirable effect of training with so-called white noise. 
\begin{theorem}\label{thm:convergprio_mse}[A-priori choice of training noise]
     Let $\mu: (0,\infty) \rightarrow \Psi$ be an a-priori choice of training noise  with $\boldsymbol{\delta}(\mu(\delta))  = \delta$ while
    \begin{align}\label{eq:decay}
    \Delta_n(\mu(\delta)) \geq \delta^2 \ell(n) 
    \end{align}
    for all $n \in \N$ and a uniform lower bound $\ell: \N \rightarrow \R$. Then it holds for every $x \in N(A)^{\perp}$  and families of noise 
    \begin{align*}
    \Phi_c = \left\{ \nu \ \Bigl | \ \Delta_n(\nu) \leq c\cdot \ell(n) \text{ for all }n \in \N\right\},
    \end{align*}
    with $c > 0$ that 
    \begin{align*}
        \sup_{\nu \in \delta \cdot \Phi_c} \left\{ \E_{\epsilon \sim \nu } \left[ {\left\| R^{\text{mse}}_{\mu(\delta)}(Ax+\epsilon) - x \right\|}^2\right] \right\} \longrightarrow 0,
    \end{align*}
    as $\delta \rightarrow 0$. Considering white training noise, i.e., 
    \begin{align*}
        \Delta_n(\mu(\delta)) = \delta^2
    \end{align*}
    for all $n\in \N$ it holds that 
    \begin{align*}
        \sup_{\nu \in \Psi, \boldsymbol{\delta}(\nu)\leq \delta} \left\{ \E_{\epsilon \sim \nu } \left[ {\left\| R^{\text{mse}}_{\mu(\delta)}(Ax+\epsilon) - x \right\|}^2\right] \right\} \longrightarrow 0,
    \end{align*}
    as $\delta \rightarrow 0$.

\end{theorem}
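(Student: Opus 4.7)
The plan is to extend the argument used for \cref{thm:convergence}, inserting the a-priori training noise $\mu(\delta)$ into the error decomposition \eqref{eq:errdecomp} and then exploiting the lower bound \eqref{eq:decay} to extract estimates that are uniform over the problem noise $\nu$.

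First I would write the expected squared error as the sum of the bias contribution $\frac{\Delta_n(\mu(\delta))^2}{(\Pi_n^* \sigma_n^2 + \Delta_n(\mu(\delta)))^2}\,\langle x,u_n\rangle^2$ and the noise contribution $\frac{\Pi_n^* \sigma_n^2 \Delta_n(\nu)}{(\Pi_n^* \sigma_n^2 + \Delta_n(\mu(\delta)))^2}\,\Pi_n^*$. The bias term is trivially dominated by $\langle x,u_n\rangle^2$. For the noise term the AM--GM inequality $(\Pi_n^* \sigma_n^2 + \Delta_n(\mu(\delta)))^2 \geq 4\,\Pi_n^* \sigma_n^2\,\Delta_n(\mu(\delta))$ reduces it to $\tfrac{\Delta_n(\nu)\,\Pi_n^*}{4\,\Delta_n(\mu(\delta))}$. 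Combining this with \eqref{eq:decay} and, for $\nu \in \delta\cdot\Phi_c$, with $\Delta_n(\nu)\leq c\delta^2\ell(n)$, produces the dominating bound $\tfrac{c}{4}\Pi_n^*$, independently of both $\delta$ and $\nu$.

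Next I would mimic the head/tail split from the proof of \cref{thm:convergence}. Since $\sum_n \Pi_n^* < \infty$ by \cref{ass:data} and $\sum_n \langle x,u_n\rangle^2 = \|x\|^2$ for $x \in N(A)^\perp$, both dominating series converge, so for any $\varepsilon > 0$ I can choose $N$ independently of $\delta$ and $\nu$ so that the tail over $n > N$ contributes less than $\varepsilon/2$. On the finite head $n\leq N$, using $\Delta_n(\mu(\delta))\leq \delta^2$ (from $\boldsymbol{\delta}(\mu(\delta))=\delta$) and $\Delta_n(\nu)\leq c\delta^2\ell(n)$, the bias term is bounded by $\tfrac{\delta^4 \|x\|^2}{(\min_{n\leq N}\Pi_n^*\sigma_n^2)^2}$ and the noise term by $\tfrac{c\delta^2 \ell(n)}{\sigma_n^2}$; both vanish as $\delta \to 0$ uniformly in $\nu$, so the head also drops below $\varepsilon/2$ for sufficiently small $\delta$.

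Finally, for the white-noise statement I would observe that $\Delta_n(\mu(\delta))=\delta^2$ is exactly the choice $\ell\equiv 1$ in \eqref{eq:decay}, while any $\nu\in\Psi$ with $\boldsymbol{\delta}(\nu)\leq\delta$ satisfies $\Delta_n(\nu)\leq \delta^2 = \delta^2 \ell(n)$ and therefore belongs to $\delta\cdot\Phi_1$; applying the first part then yields the uniform convergence over the whole admissible set. The main obstacle I anticipate is keeping the head estimate uniform in $\nu$: the denominators $(\Pi_n^* \sigma_n^2 + \Delta_n(\mu(\delta)))^2$ on the head stay bounded away from zero, so the required $\delta^2$-decay must come entirely from $\Delta_n(\nu)$ and $\Delta_n(\mu(\delta))^2$, which is precisely the role of the coupling condition \eqref{eq:decay} between the scales of $\mu$ and $\nu$.
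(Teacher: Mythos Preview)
Your proposal is correct and follows essentially the same route as the paper's proof: the error decomposition \eqref{eq:errdecomp}, a uniform tail bound via $\Delta_n(\nu)\leq c\,\Delta_n(\mu(\delta))$ combined with the summability of $\Pi_n^*$ and $\langle x,u_n\rangle^2$, and a head estimate that vanishes with $\delta$. The only cosmetic differences are that the paper uses the cruder inequality $(a+b)^2\geq 2ab$ (yielding the constant $c/2$ instead of your $c/4$) and bounds the bias head term by $\delta^2\|x\|^2/\min_{n\leq N}\{\Pi_n^*\sigma_n^2\}$ rather than your $\delta^4$-version; neither affects the argument.
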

\begin{proof} 
The proof is along the lines of the proof for Theorem \ref{thm:convergence}. The mean squared error for fixed $x \in N(A)^\perp$ can be decomposed for any choice of $\mu$ and $\nu$ as in \eqref{eq:errdecomp}. For an a-priori parameter choice rule $\mu: (0,\infty) \rightarrow \Psi$ that fulfills the decay requirement \eqref{eq:decay} and $\nu \in \delta \cdot \Phi_c$ we can substitute the bound on the second summand \eqref{eq:tbound} by
\begin{align*}
    \frac{\Pi^*_n\sigma_n^2\Delta_n(\nu)}{\left(\Pi^*_n\sigma_n^2 + \Delta_n(\mu(\delta))\right)^2} \leq \frac{\Pi^*_n\sigma_n^2\Delta_n(\nu)}{2 \Pi^*_n\sigma_n^2\Delta_n(\mu(\delta))} \leq \frac{c}{2},
\end{align*}
as by definition
\begin{align*}
    \Delta_n(\nu) \leq \delta^2\cdot c \cdot \ell(n) \leq c \cdot \Delta_n(\mu(\delta)).
\end{align*}
For white training noise and $\nu \in \Psi$ with $\boldsymbol{\delta}(\nu) \leq \delta$ we derive that $\Delta_n(\nu) \leq \delta^2 = \Delta_n(\mu(\delta))$ for all $n \in \N$ and can thus substitute \eqref{eq:tbound} by 
\begin{align*}
    \frac{\Pi^*_n\sigma_n^2\Delta_n(\nu)}{\left(\Pi^*_n\sigma_n^2 + \Delta_n(\mu(\delta))\right)^2} \leq \frac{\Pi^*_n\sigma_n^2\Delta_n(\nu)}{2 \Pi^*_n\sigma_n^2\Delta_n(\mu(\delta))} \leq \frac{1}{2}.
\end{align*}
The remaining part of the proof can be done analogously to the proof of Theorem \ref{thm:convergence}, where in the case $\nu \in \Phi_c$ with $c \geq 1$ the noise level $\delta$ additionally has to be chosen 
proportional to $1/\sqrt{c}$.
\end{proof}
The arguments of the proofs for convergence of the regularization can further be used to show the convergence of the bias $e_0^{\text{mse}}(\mu)$ of $R_\mu^{\text{mse}}$ {to zero} as $\boldsymbol{\delta}(\mu)$ tends to zero, since for any $N \in \N$ it holds that
\begin{align*}
    e_0^{\text{mse}}(\mu) =  \sum_{n\in \N} \frac{1}{ \left(\frac{\sigma_n^2\, \Pi_n^*}{\Delta_n(\mu)} + 1\right)^2}  \Pi^*_n \leq \boldsymbol{\delta}(\mu)^2\frac{N}{\min_{n \leq N}{\sigma_n^2}} + \sum_{n > N} \Pi_n^*.
\end{align*}
In the remaining sections, we transfer three different recent data-driven reconstruction methods to the spectral setting and analyze how they fit into our framework of data-driven regularizations. 
\section{Regularization by Denoising}\label{sec:denoising}
We first explore two forms of regularization by denoising. Their main difference to the previous approach is that they do not use any information on the singular values of the operator during the optimization of the parameters. Therefore, they can both be classified as so-called plug-and-play approaches, where a denoiser is trained independently of the final application and later plugged into the reconstruction operator for a specific problem. The first approach can be seen as a form of post-processing, where a naive reconstruction is mapped to a regularized reconstruction. The second is related to the usual understanding of plug-and-play methods and considers proximal optimization methods, where the proximal map of the regularization functional is substituted by a denoiser.
In both cases, we consider denoisers
\begin{align}\label{eq:denoiser}
    Dx = \operatorname*{arg\,min}_{x' \in X} \|x-x'\|^2 + J_\lambda(x')= \sum_{n \in \N} \frac{1}{1+\lambda_n} \langle x', u_n\rangle u_n,
\end{align}
which will then lead to a reconstruction operator of the form \eqref{eq:tikhRegu}.
Since the denoising problem resembles an inverse problem for $A = \operatorname{Id}$, we know that for noise drawn from a distribution $\mu$ defined on $X$, the denoiser of the form \eqref{eq:denoiser} minimizes
\begin{align*}
    \E_{x\sim \pi^*, \epsilon \sim \mu} \left[ \| x - D(x + \epsilon) \|^2 \right]
\end{align*}
if its coefficients are given by
\begin{align*}
    \lambda_n(\mu) = \frac{\Tilde{\Delta}_n(\mu)}{\Pi_n^*},
\end{align*}
where we define $\Tilde{\Delta}_n(\mu) = \E_{\epsilon \sim \mu}\left[\langle \epsilon, u_n \rangle^2\right]$ in accordance to \eqref{eq:pidelta} but for noise sampled in the space $X$. 
In the following, we denote this denoiser by $D^{\text{mse}}_{\mu}$.
\subsection{Post-processing approach}\label{sec:post}
The idea of the post-processing approach is to obtain a regularization by first attempting a naive reconstruction approach, which is then corrected by applying a denoiser. {This has been proposed e.g., in \cite{FBPConvNet}, where in a tomographic setup the first step was carried out by filtered backprojection. }In our setting, training a denoiser on noisy data with noise model $\mu$ thus yields the reconstruction operator  
\begin{align*}
    R^{\text{post}}_{\mu}(y) = \left(D^{\text{mse}}_{\mu}\circ A^{\dagger}\right)y = \sum_{n \in \N} \frac{\sigma_n}{\sigma_n^2 + \frac{\sigma_n^2\Tilde{\Delta}_n(\mu)}{\Pi_n^*}} \langle y, v_n\rangle u_n.
\end{align*}
We note that an equivalent reconstruction operator can emerge from a pre-processing approach, in which a denoiser is applied to the  measurements before performing a naive reconstruction.
We easily see that $R^{\text{post}}_{\mu} = R^{\text{mse}}_{\nu}$ means that \begin{align}\label{eq:ratioPost} \frac{\Delta_n(\nu)}{\Tilde{\Delta}_n(\mu)} = \sigma_n^2. \end{align}
Therefore, if we want to use the post-processing approach to obtain the optimal mse-reconstruction operator, the training noise for the denoiser has to decrease slower than the underlying problem noise. However, fulfilling this requirement for $\nu$ being white noise would exceed our framework since it leads to $\boldsymbol{\delta}(\mu) = \infty$, where with a slight abuse of notation we define $\boldsymbol{\delta}(\mu) = \sqrt{\sup_{n \in \N} \Tilde{\Delta}_n(\mu)} $, as we deal with noise defined on $X$ now. Another possibility would be to use smoother training data instead, i.e., choose $\Tilde{\Delta}_n(\mu) = \Delta_n(\nu)$ but use training data sampled from $\pi$ with 
\begin{align*}
    \Pi_n(\pi) = \sigma_n^2 \Pi_n^*
\end{align*}
for all $n \in \N$. The analogous condition in the pre-processing setting is fulfilled if the denoiser is trained on measurements $Ax$, with $x \sim \pi^*$. However, both strategies require the knowledge of the singular values $\sigma_n$ in the training process already and thus are not in line with the idea of training a denoiser without specifically knowing the final problem. In the following, we hence investigate if and how the post-processing approach can yield a convergent regularization while still training on the original data $\pi^*$ and noise with a finite noise level. The first observation we make is that a finite noise level during training can only lead to a continuous reconstruction operator for suitable data distributions $\pi^*$.
\begin{lemma}\label{lem:cont_post}
The reconstruction operator $R^{\text{post}}_{\mu}: Y \rightarrow X$ is continuous if and only if there exists a constant $c > 0$ such that
\begin{align*}
    \sigma_n\Tilde{\Delta}_n(\mu) \geq c \,\Pi_n^*
\end{align*}
for almost all $n \in \N$.    
\end{lemma}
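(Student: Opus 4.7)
My plan is to mimic the proof of Lemma~\ref{lem:cont} closely, since the structural situation is the same: the reconstruction operator is linear and diagonal in the singular system, so its continuity reduces to the uniform boundedness of its spectral coefficients. The only difference is the explicit form of those coefficients in the post-processing case, where the singular values $\sigma_n$ enter in a different way.

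First I would rewrite the coefficient of $R^{\text{post}}_\mu$ in a cleaner form. Starting from
\begin{align*}
g_n^{\text{post}}(\mu) = \frac{\sigma_n}{\sigma_n^2 + \frac{\sigma_n^2 \Tilde{\Delta}_n(\mu)}{\Pi_n^*}} = \frac{\Pi_n^*}{\sigma_n\bigl(\Pi_n^* + \Tilde{\Delta}_n(\mu)\bigr)},
\end{align*}
I would observe, as in Lemma~\ref{lem:cont}, that continuity of the linear operator $R^{\text{post}}_\mu$ is equivalent to
\begin{align*}
\|R^{\text{post}}_\mu\| = \sup_{n \in \N} g_n^{\text{post}}(\mu) < \infty,
\end{align*}
which in turn is equivalent to
\begin{align*}
\inf_{n \in \N} \left( \sigma_n + \frac{\sigma_n \Tilde{\Delta}_n(\mu)}{\Pi_n^*} \right) > 0.
\end{align*}

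Next I would exploit the fact that $\sigma_n \to 0$ as $n \to \infty$, since $A$ is compact with infinite-dimensional range. This means the first summand $\sigma_n$ contributes nothing to the infimum asymptotically: the infimum is strictly positive if and only if there is a constant $c > 0$ such that $\sigma_n \Tilde{\Delta}_n(\mu)/\Pi_n^* \geq c - \sigma_n$ for all $n$, and (since $\sigma_n \to 0$) this is equivalent to $\sigma_n \Tilde{\Delta}_n(\mu) \geq c\,\Pi_n^*$ holding for almost all $n$. Any finite set of indices where the inequality fails can be absorbed by enlarging $c$ appropriately, using that $\Pi_n^* > 0$ by Assumption~\ref{ass:data} and $\sigma_n > 0$ for all $n$. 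This establishes both directions of the claimed equivalence.

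I do not foresee a serious technical obstacle here: the argument is a direct adaptation of Lemma~\ref{lem:cont}, with the only subtlety being the bookkeeping around the ``almost all $n$'' phrasing and the extra factor of $\sigma_n$ multiplying $\Tilde{\Delta}_n(\mu)$ (reflecting that the denoiser's training noise lives on $X$ rather than $Y$). No use is made of any property of $\mu$ beyond the definition of $\Tilde{\Delta}_n(\mu)$, nor of any additional regularity of $\pi^*$ beyond Assumption~\ref{ass:data}.
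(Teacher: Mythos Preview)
Your argument is correct and essentially parallels the paper's, but the paper takes a slightly shorter route: rather than re-running the proof of Lemma~\ref{lem:cont} with the new coefficients, it simply observes that $R^{\text{post}}_\mu = R^{\text{mse}}_\nu$ for the choice $\Delta_n(\nu) = \sigma_n^2 \Tilde{\Delta}_n(\mu)$ (equation~\eqref{eq:ratioPost}), applies Lemma~\ref{lem:cont} as a black box to obtain $\Delta_n(\nu) \geq c\,\sigma_n \Pi_n^*$, and then divides through by $\sigma_n > 0$. Your direct computation of $g_n^{\text{post}}(\mu)$ and analysis of its reciprocal arrives at the same inequality, just without the detour through an auxiliary $\nu$. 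One minor wording issue: your phrase ``absorbed by enlarging $c$'' is backwards --- for the finitely many exceptional indices you actually use that $\sigma_n > 0$ already makes each term positive (or equivalently you \emph{shrink} $c$), but this does not affect the validity of the argument.
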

\begin{proof}
    First, we notice that $R^{\text{post}}_{\mu}$ is continuous if and only if $R^{\text{mse}}_{\nu}$ with $\nu$ fulfilling \eqref{eq:ratioPost} is continuous.
    By Lemma \ref{lem:cont} we know that the continuity of $R^{\text{mse}}_{\nu}$ is equivalent to the existence of $c > 0$ such that
    \begin{align*}
    \sigma_n^2 \Tilde{\Delta}_n(\mu) = \Delta_n(\nu) \geq c \, \sigma_n \Pi_n^*
    \end{align*}
    for almost all $n \in \N$. The positivity of all $\sigma_n$ allows to write the above inequality as claimed in the Lemma.
\end{proof}
\begin{corollary}
    Let the reconstruction operator $R^{\text{post}}_{\mu}:Y \rightarrow X$ be continuous for a noise distribution $\mu$ with $\boldsymbol{\delta}(\mu) < \infty$. Then the data distribution $\pi^*$ fulfills
    \begin{align*}
        c \, \Pi_n^* \leq \sigma_n
    \end{align*}
    for a constant $c > 0$ and almost all $n \in \N$.
\end{corollary}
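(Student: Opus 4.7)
My plan is to combine the continuity characterization given by the preceding lemma with the definition of the noise level, which by assumption is finite. Both ingredients directly control quantities indexed by $n$, so once they are stacked together the claim drops out without any extra analysis.

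Concretely, I would first invoke \cref{lem:cont_post}: since $R^{\text{post}}_{\mu}$ is assumed continuous, there exists a constant $c_1 > 0$ such that
\begin{align*}
\sigma_n \tilde{\Delta}_n(\mu) \geq c_1 \, \Pi_n^*
\end{align*}
for almost all $n \in \N$. Next, I would use the hypothesis $\boldsymbol{\delta}(\mu) < \infty$. Recalling that in this section $\boldsymbol{\delta}(\mu) = \sqrt{\sup_{n \in \N} \tilde{\Delta}_n(\mu)}$, we get the uniform upper bound $\tilde{\Delta}_n(\mu) \leq \boldsymbol{\delta}(\mu)^2$ for every $n \in \N$.

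Plugging this upper bound into the first inequality yields
\begin{align*}
c_1 \, \Pi_n^* \leq \sigma_n \tilde{\Delta}_n(\mu) \leq \sigma_n \, \boldsymbol{\delta}(\mu)^2
\end{align*}
for almost all $n \in \N$, and setting $c \coloneqq c_1/\boldsymbol{\delta}(\mu)^2 > 0$ gives the claimed inequality $c \, \Pi_n^* \leq \sigma_n$. There is no real obstacle here; the corollary is essentially a bookkeeping consequence of \cref{lem:cont_post} together with the fact that a finite statistical noise level bounds each coordinate variance uniformly. The only thing worth double-checking is that the exceptional set of indices allowed by \cref{lem:cont_post} (``almost all $n$'') is preserved, which it is since only uniform bounds are combined.
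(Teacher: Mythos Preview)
Your proposal is correct and follows essentially the same approach as the paper: invoke \cref{lem:cont_post} to get $\sigma_n\tilde{\Delta}_n(\mu)\geq c_1\Pi_n^*$ for almost all $n$, bound $\tilde{\Delta}_n(\mu)\leq\boldsymbol{\delta}(\mu)^2$, and set $c=c_1/\boldsymbol{\delta}(\mu)^2$. The paper's proof is identical in substance, merely phrasing the first step via the equivalent inequality $\sigma_n^2\tilde{\Delta}_n(\mu)\geq \tilde{c}\,\sigma_n\Pi_n^*$ before dividing through by $\sigma_n$.
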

\begin{proof}
    By Lemma \ref{lem:cont_post} we know that the continuity of $R^{\text{post}}_{\mu}$ yields the existence of $\Tilde{c} > 0$ such that
    \begin{align*}
    \sigma_n^2 \Tilde{\Delta}_n(\mu) = \Delta_n(\nu) \geq \Tilde{c} \, \sigma_n \Pi_n^*
    \end{align*}
    for almost all $n \in \N$. Since $\Tilde{\Delta}_n(\mu) \leq \boldsymbol{\delta}(\mu)^2$ by definition, the claimed inequality is fulfilled with $c = \frac{\Tilde{c}}{\boldsymbol{\delta}(\mu)^2}$.
    \end{proof}
Although the optimal mse-regularizer cannot always be obtained by the post-processing approach with training noise of a finite noise level, we can still find a-priori rules to choose the training noise that do not depend on the singular values of the forward operator. Just as continuity, this is only possible under additional assumptions on the data.
\begin{theorem}\label{thm:conv_post}
Let the data distribution $\pi^*$ fulfill
\begin{align*}
    d \, \Pi_n^* \leq \sigma_n^2
\end{align*}
for a constant $d > 0$ and all $n \in \N$, further let $\mu: (0,\infty) \rightarrow \Psi$ be an a-priori choice of training noise  with $\boldsymbol{\delta}(\mu(\delta))  = \delta$ while
    \begin{align}\label{eq:decay2}
    \Tilde{\Delta}_n(\mu(\delta)) \geq \delta^2 \ell(n) 
    \end{align}
    for all $n \in \N$ and a uniform lower bound $\ell: \N \rightarrow \R$. Then it holds for every $x \in N(A)^{\perp}$  and families of noise 
    \begin{align*}
    \Phi^2_c = \left\{ \nu \ \Bigl | \ \Delta_n(\nu) \leq c\cdot \ell(n)^2 \text{ for all }n \in \N\right\}
    \end{align*}
    with $c > 0$ that 
    \begin{align*}
        \sup_{\nu \in \delta^2 \cdot \Phi^2_c} \left\{ \E_{\epsilon \sim \nu } \left[ {\left\| R^{\text{post}}_{\mu(\delta)}(Ax+\epsilon) - x \right\|}^2\right] \right\} \longrightarrow 0
    \end{align*}
    as $\delta \rightarrow 0$. Considering white training noise, i.e., 
    \begin{align*}
        \Tilde{\Delta}_n(\mu(\delta)) = \delta^2
    \end{align*}
    for all $n\in \N$ it holds that 
    \begin{align*}
        \sup_{\nu \in \Psi, \boldsymbol{\delta}(\nu)\leq \delta^2} \left\{ \E_{\epsilon \sim \nu } \left[ {\left\| R^{\text{post}}_{\mu(\delta)}(Ax+\epsilon) - x \right\|}^2\right] \right\} \longrightarrow 0
    \end{align*}
    as $\delta \rightarrow 0$.
    
\end{theorem}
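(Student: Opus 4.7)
The plan is to adapt the tail--head splitting argument of Theorems \ref{thm:convergence} and \ref{thm:convergprio_mse} to the specific coefficients of $R^{\text{post}}_{\mu}$. From the definition of the post-processing operator we have $\sigma_n g^{\text{post}}_n(\mu) = \Pi_n^*/(\Pi_n^* + \Tilde{\Delta}_n(\mu))$, so that expanding $\|R^{\text{post}}_\mu(Ax+\epsilon) - x\|^2$ in the singular basis and taking expectations over $\epsilon\sim\nu$ gives, for fixed $x \in N(A)^\perp$, the bias--variance decomposition
\begin{equation*}
\E_{\epsilon\sim\nu}\!\left[\|R^{\text{post}}_{\mu}(Ax+\epsilon) - x\|^2\right] = \sum_{n\in\N} \frac{\Tilde{\Delta}_n(\mu)^2}{(\Pi_n^* + \Tilde{\Delta}_n(\mu))^2}\langle x, u_n\rangle^2 + \sum_{n\in\N}\frac{(\Pi_n^*)^2\,\Delta_n(\nu)}{\sigma_n^2(\Pi_n^* + \Tilde{\Delta}_n(\mu))^2}.
\end{equation*}

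Next I would produce $(\delta,\nu)$-uniform summable majorants. The approximation coefficient is at most $1$, so the first series is dominated term-by-term by $\langle x, u_n\rangle^2$, which is summable. For the noise amplification term, the regularity hypothesis $d\Pi_n^* \leq \sigma_n^2$ yields $(\Pi_n^*)^2/\sigma_n^2 \leq \Pi_n^*/d$, and together with the crude estimate $(\Pi_n^*+\Tilde{\Delta}_n(\mu))^2 \geq \Tilde{\Delta}_n(\mu)^2$ and the lower bound $\Tilde{\Delta}_n(\mu(\delta)) \geq \delta^2\ell(n)$, this gives, for every $\nu \in \delta^2\Phi_c^2$,
\begin{equation*}
\frac{(\Pi_n^*)^2\,\Delta_n(\nu)}{\sigma_n^2(\Pi_n^* + \Tilde{\Delta}_n(\mu(\delta)))^2} \leq \frac{\Pi_n^*}{d}\cdot\frac{\delta^4 c\,\ell(n)^2}{\delta^4\ell(n)^2} = \frac{c}{d}\Pi_n^*,
\end{equation*}
which is summable by Assumption \ref{ass:data}. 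The white-noise case is the same computation with $\Tilde{\Delta}_n(\mu(\delta))=\delta^2$ and $\Delta_n(\nu)\leq \delta^4$, yielding the majorant $\Pi_n^*/d$.

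With these majorants the argument concludes as in Theorem \ref{thm:convergence}. For any $\varepsilon > 0$ I would first pick $N$ large enough that both tails beyond $N$ are below $\varepsilon/2$, uniformly in $\delta$ and $\nu$. For the finite head $n \leq N$, the estimate $\Tilde{\Delta}_n(\mu(\delta)) \leq \delta^2$ drives the approximation part to zero like $\delta^4$, while $\Delta_n(\nu)/\sigma_n^2$ is bounded by $c\,\delta^4(\sup_{n\leq N}\ell(n))^2/\sigma_N^2$ in the $\Phi_c^2$ case (resp.\ by $\delta^4/\sigma_N^2$ in the white-noise case), again of order $\delta^4$. The required smallness of $\delta$ depends only on $x$, $N$, $c$, $d$ and $\sup_{n\leq N}\ell(n)$, but not on $\nu$, which yields the claimed uniform limit.

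The main obstacle, and the conceptual reason the regularity assumption $d\Pi_n^*\leq \sigma_n^2$ is imposed, is that the composition $D^{\text{mse}}_\mu \circ A^\dagger$ already amplifies noise by $\sigma_n^{-1}$ before any denoising takes place. Without this hypothesis, the $\sigma_n^{-2}$ blow-up in the noise amplification term cannot be absorbed into a summable $\Pi_n^*$-weighted bound; it is also the source of the asymmetric scaling $\boldsymbol{\delta}(\nu) = O(\delta^2)$ rather than $O(\delta)$ for the admissible problem noise, which is the price paid for not using the singular values of $A$ when training the denoiser.
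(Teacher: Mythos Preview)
Your proposal is correct and follows essentially the same route as the paper's proof: the identical bias--variance decomposition in the singular basis, the uniform majorant $1$ on the approximation coefficients, the bound $\frac{c}{d}\Pi_n^*$ on the noise term obtained by combining $d\Pi_n^*\leq\sigma_n^2$ with $(\Pi_n^*+\Tilde\Delta_n(\mu))^2\geq\Tilde\Delta_n(\mu)^2$ and the decay hypothesis, and then the head--tail split. The only cosmetic difference is that the paper states a $\delta^2$-type bound on the finite head (using $a^2/(a+b)^2\leq a/b$) whereas you obtain a $\delta^4$ rate; both suffice.
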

\begin{proof}

Again, the proof is along the lines of the proof for Theorem \ref{thm:convergence}. The mean squared error obtained for training noise $\mu$ and $x \in N(A)^\perp$ can be decomposed into
\begin{align*}
\E_{\epsilon \sim \nu} &\left[ \left\| R^{\text{post}}_{\mu}(Ax+\epsilon) -x \right\|^2\right]  = \sum_{n \in \N} (1-\sigma_n g^{\text{post}}_n(\mu))^2 \langle x, u_n \rangle^2 + g^{\text{post}}_n(\mu)^2\Delta_n(\nu) = \\
&=\sum_{n \in \N} \frac{\sigma_n^4\Tilde{\Delta}_n(\mu)^2}{\left( \Pi_n^* \sigma_n^2+ \sigma_n^2 \Tilde{\Delta}_n(\mu)\right)^2} \langle x, u_n \rangle^2 + \frac{\Pi_n^*\sigma_n^2\Delta_n(\nu)}{\left(\Pi_n^*\sigma_n^2 + \sigma_n^2\Tilde{\Delta}_n(\mu)\right)^2} \Pi_n^*.
\end{align*}For an a-priori parameter choice rule $\mu: (0,\infty) \rightarrow \Psi$ that fulfills the decay requirement \eqref{eq:decay2} and $\nu \in \delta \cdot \Phi^2_c$ we can bound the second summand by
\begin{align*}
    \frac{\Pi^*_n\sigma_n^2\Delta_n(\nu)}{\left(\Pi^*_n\sigma_n^2 + \sigma_n^2\Tilde{\Delta}_n(\mu(\delta))\right)^2} \leq \frac{\Pi^*_n\Delta_n(\nu)}{\sigma_n^2\Tilde{\Delta}_n(\mu)^2} \leq \frac{c}{d},
\end{align*}
where in the last step we use the requirement on $\pi^*$ and
\begin{align*}
    \Delta_n(\nu) \leq \delta^4 \cdot c \cdot \ell(n)^2 \leq c \cdot \Tilde{\Delta}_n(\mu(\delta))^2.
\end{align*}
For white training noise and $\nu \in \Psi$ with $\boldsymbol{\delta}(\nu) \leq \delta^2$ we derive that $\Delta_n(\nu) \leq \delta^4= \Tilde{\Delta}_n(\mu(\delta))^2$ for all $n \in \N$ and can thus bound the second summand by 
\begin{align*}
    \frac{\Pi^*_n\sigma_n^2\Delta_n(\nu)}{\left(\Pi^*_n\sigma_n^2 + \sigma_n^2\Tilde{\Delta}_n(\mu(\delta))\right)^2} \leq \frac{\Pi^*_n\Delta_n(\nu)}{\sigma_n^2\Tilde{\Delta}_n(\mu(\delta))^2} \leq \frac{1}{d}.
\end{align*}
The remaining steps can be carried out in a similar way to the proof of Theorem \ref{thm:convergence} where the choice $N$ to control the tail of the series leads to a slightly different bound \begin{align*}
    \delta \leq \frac{\sqrt{\varepsilon}}{\sqrt{2 \cdot \left(\frac{\|x\|^2}{\min_{n \leq N} \left\{\Pi^*_n\right\}} + \frac{N}{\sigma_N^2}\right)}}.
\end{align*}
    \end{proof}
Similar to the supervised approach, the convergence of the bias $e_0^{\text{post}}(\mu)$ follows from the upper bound 
\begin{align*}
    e_0^{\text{post}}(\mu) =  \sum_{n\in \N} \frac{1}{ \left(\frac{\Pi_n^*}{\Delta_n(\mu)} + 1\right)^2}  \Pi^*_n \leq \boldsymbol{\delta}(\mu)^2 \cdot N + \sum_{n > N} \Pi_n^*,
\end{align*}
that holds for any $N \in \N$.
\subsection{Proximal map approach}\label{sec:prox}
The second approach of regularization by denoising is a simplified version of regularization with plug-and-play priors, which was proposed in \cite{venkatakrishnan2013}. It is based on iterative optimization algorithms for variational problems, which make use of the proximal map corresponding to the regularization functional. The proximal map of a suitable functional $J$ is given for $x \in X$ by \begin{align*}
    \operatorname{prox}_J(x) = \operatorname*{arg\, min}_{x' \in X} \frac{1}{2} \|x -x'\|^2 + J(x').
\end{align*} The idea of plug-and-play priors is to substitute the proximal map of the regularization functional by an already trained denoiser, which in our case has the form \eqref{eq:denoiser}. Due to its specific form we see by insertion that the optimal mse-denoiser $D^{\text{mse}}_{\mu}$ is the proximal map corresponding to the regularization functional 
\begin{align*}
    J^{\text{mse}}_{\mu}(x) = \sum_{n \in \N} \lambda^{\text{mse}}_n(\mu)\langle x,u_n \rangle^2 = \sum_{n \in \N} \frac{\Tilde{\Delta}_n(\mu)}{\Pi_n^*}\langle x,u_n \rangle^2.
\end{align*}
Therefore, inserting $J^{\text{mse}}_{\mu}$ into the variational approach \eqref{eq:tikhRegu} leads to the reconstruction operator
\begin{align*}
    R^{\text{prox}}_\mu(y) = \sum_{n \in \N} \frac{\sigma_n}{\sigma_n^2+\frac{\Tilde{\Delta}_n(\mu)}{\Pi_n^*}} \langle y, v_n\rangle u_n.
\end{align*}
We see that the only difference between the reconstruction operator based on the proximal map approach and the reconstruction operator based on minimizing the mean squared error is the domain in which the noise is added. Therefore, the results from Section \ref{sec:mse} transfer to the setting of spectral plug-and-play priors.
\section{Adversarial Regularization}\label{sec:adversarial}
The idea of using adversarial regularization for inverse problems proposed in \cite{lunz18} is to train the regularization functional as a critic that distinguishes naive reconstructions and ground truth data. The approach is unsupervised in the sense that it does not require pairs of ground truths and reconstructions, but rather independent samples of the real data distribution and the distribution of naive reconstructions. In its original formulation, the regularization functional should be a universal approximator to approximate the $1$-Wasserstein distance between the distribution of the ground truth data $\pi^*$ and the naive reconstructions $\Tilde{\pi}$ via Kolmogorov's duality formula,
\begin{align*}
    W_1(\pi^*, \Tilde{\pi}) = -\inf_{f \in 1\text{-Lip}} \E_{x \sim \pi^*}[f(x)] - \E_{x \sim \Tilde{\pi}}[f(x)],
\end{align*}
where we denote the set of $1$-Lipschitz functions by $1$-Lip. Using the pseudo-inverse $A^\dagger$ as a naive reconstruction operator and noise drawn from $\mu$, we can rewrite the formula as
\begin{align*}
    W_1(\pi^*, \Tilde{\pi}) = -\inf_{f \in 1\text{-Lip}} \E_{x \sim \pi^*}\left[f(x)\right] - \E_{x \sim \pi^*, \epsilon \sim \mu}\left[f\left(A^\dagger(Ax+\epsilon)\right)\right].
\end{align*}
Due to the instability of $A^\dagger$ the above quantity is in general not {finite}. This can be seen {e.g.,} by restricting the set of functions $f$ to spectral regularizers $J_\lambda$ of the form \eqref{eq:Jlambda}. Such a regularization functional parametrized by $\{\lambda_n\}_{n \in \N}$ is $1$-Lipschitz continuous (with respect to the Hilbert space norm) if and only if
$\lambda_n \leq \frac{1}{2}$
for all $n \in \N.$ Therefore we derive the lower bound 
\begin{align*}
    W_1(\pi^*, \Tilde{\pi}) \geq -\inf_{J_{\lambda}\in 1\text{-Lip}} &\E_{x \sim \pi^*}\left[J_{\lambda}(x)\right] - \E_{x \sim \pi^*, \epsilon \sim \mu}\left[J_\lambda\left(A^\dagger(Ax+\epsilon)\right)\right]\\
    &= \sup_{J_{\lambda}\in 1\text{-Lip}}\sum_{n \in \N}  \frac{\lambda_n \Delta_n(\mu)}{\sigma_n^2} = \sum_{n \in \N} \frac{\Delta_n(\mu)}{2\sigma_n^2},
\end{align*}
which means we would have to assume
\begin{align*}
    \sum_{n \in \N} \frac{\Delta_n(\mu)}{\sigma_n^2} < \infty
\end{align*}
for this interpretation. To circumvent this strong restriction, we relax the approach to minimize each summand separately.
\subsection{Relaxed gradient penalty}\label{sec:adv}
Relaxing the requirement of $1$-Lipschitz-continuity, we first consider a parametrization by $\{\lambda^{\text{adv,}\beta}_n\}_{n\in \N}$ which minimizes
\begin{align}\label{eq:wassersteinopti}
    \E_{x}\left[J_{\lambda}(x)\right] - \E_{x, \epsilon \sim \mu}\left[J_{\lambda}(A^{\dagger}(Ax+\epsilon))\right] + \beta\,\E_{x,\epsilon \sim \mu, t \sim U([0,1])}\left[\|\nabla J_\lambda(x_t)\|^2\right],
\end{align}
where $\beta > 0$ controls the penalty term. The gradient is evaluated in convex combinations of ground truth data and reconstructions, with $t$ drawn uniformly from $[0,1]$ and $x_t = x + (1-t) A^{\dagger}\epsilon$.
We recall that
\begin{align*}
     \E_{x}\left[J_{\lambda}(x)\right] - \E_{x, \epsilon\sim \mu}\left[J_{\lambda}(A^{\dagger}(Ax+\epsilon))\right]  =  -\sum_{n \in \N} \frac{\lambda_n \Delta_n(\mu)}{\sigma_n^2}.
\end{align*}
Since further 
\begin{align*}
    \|\nabla J_\lambda(x_t)\|^2 &= \|2 \sum_{n \in \N} \lambda_n \langle x_t, u_n \rangle u_n \|^2 = 4 \sum_{n \in \N} \lambda_n^2 \langle x_t, u_n \rangle^2 \\
    &= 4 \sum_{n \in \N} \lambda_n^2 \left(\langle x, u_n \rangle + \frac{1-t}{\sigma_n
    }\langle \epsilon, v_n\rangle\right)^2 
\end{align*}
and $x, \epsilon$ and $t$ are uncorrelated, we derive that
\begin{align*}
    \beta \, \E_{x,\epsilon, t}\left[\|\nabla J_\lambda(x_t)\|^2\right] &= 4\beta \sum_{n\in \N} \lambda_n^2 \left( \Pi^*_n + \frac{\E_t\left[(1-t)^2\right]}{\sigma_n^2}\Delta_n(\mu) \right) \\ &= 4\beta \sum_{n\in \N} \lambda_n^2 \left( \Pi^*_n + \frac{\Delta_n(\mu)}{3\sigma_n^2} \right). 
\end{align*}
Therefore, we obtain the reconstruction parameters by minimizing
\begin{align*}
    - \frac{\lambda_n \Delta_n(\mu)}{\sigma_n^2} + 4 \beta  \lambda_n^2 \left( \Pi^*_n + \frac{\Delta_n(\mu)}{3\sigma_n^2} \right)
\end{align*}
for each $n \in \N$ separately.
From the first-order optimality conditions, it follows that the optimal choice $\lambda^{\text{adv,}\beta}_n$ fulfills
\begin{align*}
    \lambda^{\text{adv,}\beta}_n(\mu)= \frac{1}{8\beta} \frac{\Delta_n(\mu)}{\sigma_n^2 \left(\Pi^*_n + \frac{\Delta_n(\mu)}{3\sigma_n^2}\right)} =  \frac{3}{8\beta} \frac{\Delta_n(\mu)}{{3} \sigma_n^2 \Pi^*_n + \Delta_n(\mu)}.
\end{align*}
The reconstruction operator obtained by solving \eqref{eq:tikhRegu}
is thus given by 
\begin{align*}
    R^{\text{adv,} \beta}_\mu y= \sum_{n \in \N} \frac{\sigma_n}{\sigma_n^2 + \frac{3}{8\beta}\frac{\Delta_n(\mu)}{(3\sigma_n^2\Pi^*_n + \Delta_n(\mu))}} \langle y, v_n\rangle u_n.
\end{align*}
The additional $\sigma_n^2$ in the denominator of $\lambda^{\text{adv,}\beta}_n(\mu)$ makes it easier to obtain continuous reconstruction operators, as the following Lemma shows.
\begin{lemma}\label{lem:cont_adv}
    The reconstruction operator $R^{\text{adv,} \beta}_{\mu}: Y \rightarrow X$ for $\mu$ with $\boldsymbol{\delta}(\mu) < \infty$ is continuous if and only if there exists a constant $c > 0$ such that
\begin{align*}
    \Delta_n(\mu) \geq c \,\sigma_n^3 \Pi^*_n
\end{align*}
for almost all $n \in \N$. 
\end{lemma}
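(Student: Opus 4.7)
The plan is to imitate the proof of Lemma \ref{lem:cont} closely, translating the continuity question into a scalar inequality on the filter coefficients and then massaging that inequality into the claimed lower bound on $\Delta_n(\mu)$.

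First, since $R^{\text{adv},\beta}_\mu$ is linear and diagonal in the singular basis, its boundedness is equivalent to $\sup_{n\in\N} g^{\text{adv},\beta}_n(\mu)<\infty$, where one reads off
\[g^{\text{adv},\beta}_n(\mu) \;=\; \frac{\sigma_n}{\sigma_n^2 + \tfrac{3}{8\beta}\,\tfrac{\Delta_n(\mu)}{3\sigma_n^2\Pi_n^* + \Delta_n(\mu)}}\, .\]
Inverting the fraction, this is equivalent to
\[\inf_{n\in\N}\Bigl(\sigma_n + \tfrac{3}{8\beta\sigma_n}\,h_n\Bigr) > 0, \qquad h_n := \frac{\Delta_n(\mu)}{3\sigma_n^2\Pi_n^* + \Delta_n(\mu)}\in[0,1].\]

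Now I would use $\sigma_n\to 0$, exactly as in the proof of Lemma \ref{lem:cont}, to argue that the infimum is positive if and only if there exists $C>0$ and $n_0\in\N$ such that $h_n \geq C\sigma_n$ for all $n\geq n_0$; the $\sigma_n$ summand alone is negligible in the tail, and the finitely many remaining indices are harmless since each $h_n>0$ (by the assumption $\boldsymbol{\delta}(\mu)<\infty$ giving $\Delta_n(\mu)$ finite, together with $\Pi_n^*>0$ and $\sigma_n>0$).

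The final step is to rewrite $h_n \geq C\sigma_n$ as the claimed inequality. Multiplying through by the denominator of $h_n$ gives $\Delta_n(\mu)(1-C\sigma_n) \geq 3C\sigma_n^3\Pi_n^*$. Because $\sigma_n\to 0$, one has $1-C\sigma_n\geq \tfrac{1}{2}$ for $n$ large enough, so $\Delta_n(\mu) \geq 6C\sigma_n^3\Pi_n^*$, which is the required bound with $c=6C$. For the converse, starting from $\Delta_n(\mu)\geq c\sigma_n^3\Pi_n^*$ one estimates
\[3\sigma_n^2\Pi_n^* + \Delta_n(\mu) \;\leq\; \Delta_n(\mu)\Bigl(1+\tfrac{3}{c\sigma_n}\Bigr),\]
which gives $h_n\geq \tfrac{c\sigma_n}{c\sigma_n+3}\geq \tfrac{c}{6}\sigma_n$ in the tail, recovering the desired infimum condition.

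I do not expect a real obstacle here; the only subtlety is handling the two-sided rearrangement between $h_n\geq C\sigma_n$ and $\Delta_n(\mu)\geq c\sigma_n^3\Pi_n^*$, which is clean once one exploits $\sigma_n\to 0$ to absorb the $C\sigma_n$ factor on the left-hand side. The role of the hypothesis $\boldsymbol{\delta}(\mu)<\infty$ is only to guarantee that $\Delta_n(\mu)$ is finite for every $n$, so that "almost all $n$" (i.e.\ a tail condition) is indeed the right quantifier in both directions.
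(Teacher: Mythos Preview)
Your proposal is correct and follows essentially the same route as the paper: both reduce continuity to a uniform lower bound on $\Delta_n(\mu)/(3\sigma_n^3\Pi_n^*+\sigma_n\Delta_n(\mu))$ (your $h_n/\sigma_n$ up to the constant $3/(8\beta)$) and then exploit $\sigma_n\to 0$ to rearrange this into the stated inequality on $\Delta_n(\mu)$. One minor correction to your closing remark: the hypothesis $\boldsymbol{\delta}(\mu)<\infty$ gives an \emph{upper} bound $\Delta_n(\mu)\leq\boldsymbol{\delta}(\mu)^2$, not $h_n>0$ (that would need $\Delta_n(\mu)>0$), and the paper invokes it when absorbing the cross term $c\sigma_n\Delta_n(\mu)$---though, as your own converse computation shows, this step can equally be handled via $\sigma_n\to 0$ alone, and the finitely many early indices are harmless simply because $\sigma_n>0$ there.
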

\begin{proof} 
Analogously to the proof of Lemma \ref{lem:cont} we can show that $R^{adv}_{\mu}: Y \rightarrow X$ is continuous if and only if there exists $c$ such that
\begin{align*}
    \frac{\Delta_n(\mu)}{3\sigma_n^3\Pi^*_n + \sigma_n\Delta_n(\mu)} \geq c
\end{align*}
for almost all $n \in \N$. The above inequality is fulfilled if and only if
\begin{align*}
    \Delta_n(\mu)\geq 3 c\, \sigma_n^3\Pi^*_n + c\, \sigma_n\Delta_n(\mu) 
\end{align*}
for almost all $n \in \N$. Since $\sigma_n \rightarrow 0$ and $\Delta_n(\mu) \leq \boldsymbol{\delta}(\mu)^2 < \infty$, this yields the claimed inequality.
\end{proof}
Studying the behavior of the parameters $\lambda_n^{\text{adv,}\beta}(\mu)$ as $n \rightarrow \infty$, we see that they are always bounded from above by $1$ independently of how the ratio $\Delta_n(\mu)/\Pi_n^*$ evolves. This leads to an additional requirement on the operator, or, respectively, its singular values, to obtain a finite expected reconstruction error for any noise model $\nu$, as for the white noise with $ \Delta_n(\nu) = \boldsymbol{\delta}(\nu)^2$ for all $n \in \N$ the reconstruction of the noise will behave like
\begin{align*}
    \E_{\epsilon \sim \nu} \left[ \left\| R^{\text{adv}}_{\mu}(
    \epsilon)    \right\|^2\right]   &= \sum_{n \in \N}  g^{\text{adv,}\beta}_n(\mu)^2\Delta_n(\nu) = \sum_{n \in \N}  \left(\frac{\sigma_n}{\sigma_n^2 + \frac{3}{8\beta}\frac{\Delta_n(\mu)}{(3\sigma_n^2\Pi^*_n + \Delta_n(\mu))}}\right)^2 \boldsymbol{\delta}(\nu)^2 \\
&\geq \sum_{n\in \N} \left(\frac{\sigma_n}{\sigma_n^2 + \frac{3}{8\beta}}\right)^2  \boldsymbol{\delta}(\nu)^2.
\end{align*}
Since $\sigma_n \rightarrow 0$ the above expression is finite if and only if $\sum_{n \in \N} \sigma_n^2 < \infty$.
 In cases in which the operator fulfills this requirement, the convergence result is comparable to the one for the post-processing approach.
\begin{theorem}\label{thm:convadv}
    Let the singular values of the forward operator $A$ fulfill
\begin{align*}
    \sum_{n\in\N} \sigma_n^2 < \infty,
\end{align*}
 further let $\mu: (0,\infty) \rightarrow \Psi$ be an a-priori choice of training noise  with $\boldsymbol{\delta}(\mu(\delta))  = \delta$ while
    \begin{align}\label{eq:decay3}
    \Delta_n(\mu(\delta)) \geq \delta^2 \ell(n) 
    \end{align}
    for all $n \in \N$ and a uniform lower bound $\ell: \N \rightarrow \R$. Then it holds for every $x \in N(A)^{\perp}$  and families of noise 
    \begin{align*}
    \Phi^2_c = \left\{ \nu \ \Bigl | \ \Delta_n(\nu) \leq c\cdot \ell(n)^2 \text{ for all }n \in \N\right\}
    \end{align*}
    with $c > 0$, that 
    \begin{align*}
        \sup_{\nu \in \delta^2 \cdot \Phi^2_c} \left\{ \E_{\epsilon \sim \nu } \left[ \left\| R^{\text{adv,}\beta}_{\mu(\delta)}(Ax+\epsilon) - x \right\|^2\right] \right\} \longrightarrow 0
    \end{align*}
    for any choice of $\beta > 0$ as $\delta \rightarrow 0$. Considering white training noise, i.e., 
    \begin{align*}
        \Delta_n(\mu(\delta)) = \delta^2
    \end{align*}
    for all $n\in \N$ it holds that 
    \begin{align*}
        \sup_{\nu \in \Psi, \boldsymbol{\delta}(\nu)\leq \delta^2} \left\{ \E_{\epsilon \sim \nu } \left[ \left\| R^{\text{adv},\beta}_{\mu(\delta)}(Ax+\epsilon) - x \right\|^2\right] \right\} \longrightarrow 0
    \end{align*}
    as $\delta \rightarrow 0$.
\end{theorem}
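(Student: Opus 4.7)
The plan is to follow the bias--variance decomposition and head--tail split from the proof of \cref{thm:convergence}. Writing $g_n = g_n^{\text{adv,}\beta}(\mu(\delta))$ and $\lambda_n = \lambda_n^{\text{adv,}\beta}(\mu(\delta))$ and expanding $\langle Ax+\epsilon, v_n\rangle = \sigma_n\langle x, u_n\rangle + \langle\epsilon, v_n\rangle$, the expected squared error for fixed $x\in N(A)^\perp$ decomposes, as in \eqref{eq:errdecomp}, into a bias sum $\sum_{n\in\N}(1-\sigma_n g_n)^2\langle x, u_n\rangle^2$ and a variance sum $\sum_{n\in\N} g_n^2 \Delta_n(\nu)$. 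Given $\varepsilon>0$, I would pick $N\in\N$ independently of $\delta$ and $\nu$ so that the tail $n>N$ contributes at most $\varepsilon/2$, and then take $\delta$ small enough to make the finite head $n\le N$ contribute at most $\varepsilon/2$.

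The bias is routine: for the tail, $(1-\sigma_n g_n)^2\leq 1$ together with $\sum_n\langle x, u_n\rangle^2=\|x\|^2<\infty$ give a bound that vanishes as $N\to\infty$, uniformly in $\delta$ and $\nu$; for the head, the identity $1-\sigma_n g_n=\lambda_n/(\sigma_n^2+\lambda_n)$ combined with the pointwise bound $\lambda_n\leq \delta^2/(8\beta\sigma_n^2\Pi_n^*)$ yields $(1-\sigma_n g_n)^2=O(\delta^4)$ for each fixed $n$, so the finite head sum scales like $C_N\delta^4\to 0$.

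The main obstacle is the variance tail. In contrast to \cref{thm:convergence} and \cref{thm:conv_post}, the coefficient $\lambda_n$ is uniformly bounded above by $3/(8\beta)$, so $g_n$ does not decay rapidly enough in $n$ for $\sum_n g_n^2\Delta_n(\nu)$ to be summable on its own. This is exactly where the hypothesis $\sum_n\sigma_n^2<\infty$ enters. Using $\sigma_n^2+\lambda_n\geq\lambda_n$ gives $g_n\leq \sigma_n/\lambda_n$, and the explicit form of $\lambda_n$ leads to
\begin{align*}
g_n^2\,\Delta_n(\nu) \;\leq\; 2\sigma_n^2\left[\frac{64\beta^2\sigma_n^4(\Pi_n^*)^2}{\Delta_n(\mu(\delta))^2}+\frac{64\beta^2}{9}\right]\Delta_n(\nu).
\end{align*}
For white training noise ($\Delta_n(\mu(\delta))=\delta^2$) and $\boldsymbol{\delta}(\nu)\leq\delta^2$ (so $\Delta_n(\nu)\leq\delta^4$), the $\delta^4$ in $\Delta_n(\nu)$ cancels the $\delta^{-4}$ and one obtains $g_n^2\Delta_n(\nu)\leq C(\beta)\bigl(\sigma_n^6(\Pi_n^*)^2+\delta^4\sigma_n^2\bigr)$ with $C(\beta)$ independent of $n$, $\delta$ and $\nu$. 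Using $\sigma_n^6\leq\sigma_1^4\sigma_n^2$ and the boundedness of $\Pi_n^*$ (which follows from \cref{ass:data}), both terms are dominated by $\sum_n\sigma_n^2<\infty$, so the variance tail tends to zero as $N\to\infty$ uniformly in $\delta\leq 1$ and in $\nu$. The variance head is handled by the alternative bound $g_n\leq 1/\sigma_n$, giving $g_n^2\Delta_n(\nu)\leq \delta^4/\sigma_n^2$ and a head contribution of order $\delta^4 N/\sigma_N^2\to 0$.

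The first assertion of the theorem is handled by the same split: one replaces $\Delta_n(\nu)\leq\delta^4$ by $\Delta_n(\nu)\leq c\delta^4\ell(n)^2$ and uses \eqref{eq:decay3} to bound $\Delta_n(\mu(\delta))^{-2}\leq \delta^{-4}\ell(n)^{-2}$ in the displayed estimate, so that the $\ell(n)^2$ factors cancel (and the remaining factor $\ell(n)^2\leq 1$, a consequence of $\Delta_n(\mu(\delta))\leq\delta^2$, is harmless). The remaining steps then mirror those in the proof of \cref{thm:convergence}.
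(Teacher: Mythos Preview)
Your proposal is correct and follows essentially the same route as the paper: the bias--variance decomposition, the head--tail split, the key inequality $g_n\le\sigma_n/\lambda_n$ for the variance tail (which brings in the summable factor $\sigma_n^2$), and the trivial bound $g_n\le 1/\sigma_n$ for the variance head. The only cosmetic difference is that you expand $(3\sigma_n^2\Pi_n^*+\Delta_n(\mu))^2$ via $(a+b)^2\le 2(a^2+b^2)$, whereas the paper keeps it together and bounds it by $\max_n\{3\sigma_n^2\Pi_n^*+1\}^2$ after restricting to $\delta<1$; both are equivalent for the purpose of dominating the tail by $C\sum_{n>N}\sigma_n^2$.
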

\begin{proof}
     Again we decompose the error obtained for fixed $x \in N(A)^\perp$ into
\begin{align*}
&\E_{\epsilon \sim \nu} \left[ \left\| R^{\text{adv,}\beta}_{\mu}(Ax+\epsilon) - x\right\|^2\right]  \\ &
= \sum_{n \in \N} (1-\sigma_n g^{\text{adv,}\beta}_n(\mu))^2 \langle x, u_n \rangle^2 + g^{\text{adv,} \beta}_n(\mu)^2\Delta_n(\nu) = \\
&=\sum_{n \in \N} \left( \frac{\frac{\Delta_n(\mu)}{3\sigma_n^2\Pi^*_n + \Delta_n(\mu)}}{\sigma_n^2 + \frac{3}{8\beta}\frac{\Delta_n(\mu)}{3\sigma_n^2\Pi^*_n + \Delta_n(\mu)}}\right)^2 \langle x, u_n \rangle^2 + \left(\frac{\sigma_n}{\sigma_n^2 + \frac{3}{8\beta}\frac{\Delta_n(\mu)}{3\sigma_n^2\Pi^*_n + \Delta_n(\mu)}}\right)^2 \Delta_n(\nu).
\end{align*}
For all $n \in \N$ we see that
\begin{align*}
\left( \frac{\frac{\Delta_n(\mu)}{3\sigma_n^2\Pi^*_n + \Delta_n(\mu)}}{\sigma_n^2 + \frac{3}{8\beta}\frac{\Delta_n(\mu)}{3\sigma_n^2\Pi^*_n + \Delta_n(\mu)}}\right)^2 \leq \frac{16}{9}\beta^2,
\end{align*}
which means that we can bound the series over the first summand independently of the choice of $\mu$. Since we are interested in the limit $\delta \rightarrow 0$, we can choose $\delta < 1$ without loss of generality. For an a-priori parameter choice rule $\mu: (0,\infty) \rightarrow \Psi$ that fulfills the decay requirement \eqref{eq:decay3} and $\nu \in \delta \cdot \Phi^2_c$ we can bound the second summand by
\begin{align*}
    \left(\frac{\sigma_n}{\sigma_n^2 + \frac{\Delta_n(\mu(\delta))}{(3\sigma_n^2\Pi^*_n + \Delta_n(\mu(\delta))}}\right)^2 \Delta_n(\nu) &\leq \frac{\Delta_n(\nu)}{\Delta_n(\mu(\delta))^2} \cdot \left (3\sigma_n^2\Pi^*_n+\Delta_n(\mu(\delta))\right)^2 \cdot \sigma_n^2\\
    &\leq c \cdot \max_{n \in \N}\left\{3\sigma_n^2\Pi_n^*+1\right\}^2 \cdot\sigma_n^2,
\end{align*}
where in the last step we use that
\begin{align*}
    \Delta_n(\nu) \leq \delta^4 \cdot c \cdot \ell(n)^2 \leq c \cdot \Delta_n(\mu(\delta))^2.
\end{align*} In the case of white noise and $\boldsymbol{\delta}(\nu) \leq \delta^2$ we get the same estimate with $c=1$. Since both $\sigma_n$ and $\Pi_n^*$ are bounded from above and due to the assumption on $\sigma_n^2$, for any given $\varepsilon > 0$ we can choose $N > 0$ independently of $\nu$ such that
\begin{align*}
     \sum_{n > \N} (1-\sigma_n g^{\text{adv,}\beta}_n(\mu))^2 \langle x, u_n \rangle^2 + g^{\text{adv,} \beta}_n(\mu)^2\Delta_n(\nu) \leq\frac{\varepsilon}{2}.
\end{align*}
It remains to show that we can control the remaining finite sum by the choice of the noise level $\delta$. Using that $\Delta_n(\nu) \leq \boldsymbol{\delta}(\nu)^2 \leq c \cdot \delta^2$, we get that 
\begin{align*}
\sum_{n \leq \N} \left( \frac{\frac{\Delta_n(\mu)}{3\sigma_n^2\Pi^*_n + \Delta_n(\mu)}}{\sigma_n^2 + \frac{3}{8\beta}\frac{\Delta_n(\mu)}{3\sigma_n^2\Pi^*_n + \Delta_n(\mu)}}\right)^2 \langle x, u_n \rangle^2 + \left(\frac{\sigma_n}{\sigma_n^2 + \frac{3}{8\beta}\frac{\Delta_n(\mu)}{3\sigma_n^2\Pi^*_n + \Delta_n(\mu)}}\right)^2 \Delta_n(\nu) \\
\leq \delta^2 \left( \frac{\|x\|^2}{3\,\min_{n \leq N}\{\sigma_n^3\Pi_n^*\} } + \frac{c\cdot N}{\sigma_N^2}\right)\leq \frac{\varepsilon}{2}
\end{align*}
for a suitable choice of $\delta$ which does not depend on $\nu$ and thus yields the uniform convergence result.
\end{proof}
In this setting, the convergence of the bias $e_0^{\text{adv, }\beta}(\mu)$ follows from the upper bound
\begin{align*}
    e_0^{\text{adv, }\beta}(\mu) &=  \sum_{n\in \N} \frac{1}{ \left(\frac{8\beta}{3}\frac{3\sigma_n^4\Pi_n^*}{\Delta_n(\mu)} + \frac{8\beta}{3}\sigma_n^2 + 1\right)^2} \Pi^*_n \\ &\leq \boldsymbol{\delta}(\mu)^2\frac{N}{ 8\beta \cdot \min_{n \leq N}{\sigma_n^4}} + \sum_{n > N} \Pi_n^*.
\end{align*}
\subsection{Promoting a source condition}\label{sec:sc}
In \cite{mukherjee2021learning} it is proposed to promote reconstruction operators that fulfill a source condition. For a solution $x \in X$, the source condition of a variational regularization with a regularization functional $J_\lambda$ is fulfilled if there exists $w \in Y$ such that
\begin{align*}
    A^*w = \nabla J_{\lambda}(x).
\end{align*} Bounding $\|w\|$ makes it possible to obtain convergence rates of classical regularizers (cf. \cite{engl1996regularization}). Generalizing the source condition to the probabilistic context, we thus want to promote $J_{\lambda}$ such that
\begin{align*}
    \E_{x}\left[\|(A^*)^\dagger\nabla J_\lambda(x)\|^2\right]
\end{align*}
is small. Therefore, we now consider a parametrization by $\{\Bar{\lambda}_n\}_{n\in \N}$ which minimizes
\begin{align}\label{eq:sourceopti}
    \E_{x}\left[J_{\lambda}(x)\right] - \E_{x, \epsilon \sim \mu}\left[J_{\lambda}(A^{\dagger}(Ax+\epsilon))\right] + \beta\, \E_{x}\left[\|(A^*)^{\dagger}\nabla J_\lambda(x)\|^2\right]
    ,
\end{align}
where again, $\beta > 0$ controls the penalty term.
We first note that
\begin{align*}
    (A^*)^{\dagger}\nabla J_\lambda(x) = 2 \sum_{n\in \N} \frac{\lambda_n}{\sigma_n} \langle x,u_n \rangle v_n
\end{align*}
and therefore we can write 
\begin{align*}
    \E_{x}\left[\|(A^*)^{\dagger}\nabla J_\lambda(x)\|^2\right] = 4 \, \E_{x}\left[\sum_{n\in \N} \frac{\lambda^2_n}{\sigma^2_n} \langle x,u_n\rangle^2 \right] = 4 \sum_{n\in \N} \frac{\lambda^2_n}{\sigma^2_n} \Pi^*_n.
\end{align*}
Thus, we minimize
\begin{align*}
    -\frac{\lambda_n \Delta_n(\mu)}{\sigma^2_n} + 4 \beta \frac{\lambda_n^2 \Pi^*_n }{\sigma_n^2}
\end{align*}
for each $n$ separately. The first order optimality conditions then lead to\begin{align*}
    \bar{\lambda}_n = \frac{1}{8\beta} \frac{\Delta_n(\mu)}{\Pi^*_n}.
\end{align*}
The reconstruction operator obtained by solving \eqref{eq:tikhRegu}
is thus given by 
\begin{align*}
    R_{\mu}^{\text{sc}, \beta}y = \sum_{n \in \N} \frac{\sigma_n}{\sigma^2_n + \frac{1}{8\beta} \frac{\Delta_n(\mu)}{\Pi^*_n}} \langle y, v_n\rangle u_n.
\end{align*}
Choosing $\beta = 1/8$ we obtain the optimal mse-regularizer $R^{\text{mse}}_\mu$ and therefore, the results from Section \ref{sec:mse} transfer to adversarial reconstruction operators that promote a source condition. Other values of $\beta$ can be treated in a completely analogous way.

\section{Comparison of all approaches}\label{sec:comp}

\setlength{\tabcolsep}{5pt}
\renewcommand{\arraystretch}{1.75}\begin{table}[]
    \centering
    \begin{tabular}{c| c|c|c}
         Optimal parameters & Continuity & Convergence &Description  \\
         \hline
           $\lambda_n^{\text{mse}}(\mu) =\frac{\Delta_n(\mu)}{\Pi^*_n}$ & \multirow{2}{*}{$\Delta_n(\mu) \geq c \,\sigma_n\Pi^*_n$ }& \multirow{2}{*}{$\Delta_n(\mu) \geq c\,\Delta_n(\nu)$} & \cref{sec:mse}\\
         \cline{1-1} \cline{4-4}
           $\lambda_n^{\text{sc}, \beta}(\mu) = \frac{1}{8\beta}\frac{\Delta_n(\mu)}{\Pi^*_n}$ & & & \cref{sec:sc} \\
         \hline
         $\lambda_n^{\text{prox}}(\mu) =
         \frac{\Tilde{\Delta}_n(\mu)}{\Pi^*_n}$ & $\Tilde{\Delta}_n(\mu) \geq c \,\sigma_n\Pi^*_n$ & $\Tilde{\Delta}_n(\mu) \geq c\,\Delta_n(\nu)$ &\cref{sec:prox}\\
         \hline
           \multirow{3}{*}{$\lambda_n^{\text{post}}(\mu) =\frac{  \sigma_n^2\Tilde{\Delta}_n(\mu)}{\Pi^*_n}$} & \multirow{3}{*}{$\sigma_n\,\Tilde{\Delta}_n(\mu) \geq c \,\Pi^*_n$} &$\sigma^2_n\geq c \,\Pi^*_n$
           \\
         && and & \cref{sec:post}\\
         && $\Tilde{\Delta}_n(\mu)^2 \geq c\,\Delta_n(\nu)$ & \\
         \hline
         \multirow{3}{*}{$\lambda_n^{\text{adv,}\beta}(\mu) = \frac{3}{8\beta}\frac{\Delta_n(\mu)}{(3\sigma_n^2\Pi^*_n + \Delta_n(\mu))}$}& \multirow{3}{*}{$\Delta_n(\mu) \geq c \,\sigma_n^3\Pi^*_n$ }&$\sum_{n \in \N} \sigma^2_n < \infty$&\\
          & & and & \cref{sec:adv}\\
         && $\Delta_n(\mu)^2 \geq c\,\Delta_n(\nu)$&\\
    \end{tabular}
    \caption{Comparison of the main results for the discussed approaches.}
    \label{tab:comparison}
\end{table}
In this section, we compare all of the approaches that were considered before. \Cref{tab:comparison} sums up the main results of each section. We visualize our findings by numerical toy examples for Computed Tomography implemented in Python\footnote{Jupyter Notebooks to recreate our examples can be found here: \url{https://github.com/samirak98/learnedSpectral}.}. Therefore, we use the package \texttt{radon.py}\footnote{The package \texttt{radon.py} can be found here:  \url{https://github.com/AlexanderAuras/radon}.} which we originally used in \cite{kabri2024convergent} as it allows for an explicit computation of the forward operator matrix and thus, its singular value decomposition. As ground truth images, we use a subset of $2400$ instances of the ground truth test data from the LoDoPaB-CT Dataset (cf. \cite{Leuschner_2021,leuschner2019_dataset}). In all cases we use Gaussian noise where we choose the covariance matrix to be diagonal with respect to the singular vectors of the forward operator, with the eigenvalues being the desired noise coefficients. We note that in case of white noise, a matrix that is diagonal with respect to the canonical basis is already diagonal in the eigenvector basis.\\Due to the specific architecture of the reconstruction operator, all approaches have in common that the obtained parameters, as well as the requirements for continuity and convergence can be expressed by means of the coefficients $\sigma_n$, $\Pi_n^*$, $\Delta_n(\mu)$ and $\Delta_n(\nu)$. As before, $\mu$ denotes the training noise and $\nu$ denotes the real problem noise. \\The first observation we make is that the optimal reconstruction operator with respect to the mean squared error, $R_{\mu}^{\text{mse}}$, can equivalently be obtained by the adversarial regularization approach that promotes a source condition, $R_{\mu}^{\text{sc,}1/8}$, and the approach using a plug-and-play prior, $R_{\mu}^{\text{prox}}$. In the latter case the noise is added in the space of ground truths, $X$, instead of the 
space of measurements, $Y$. \\
With the post-processing approach, $R^{\text{post}}_\mu$, and the adversarial regularization approach that does not penalize the source condition term, but the spectral norm of the gradient, $R^{\text{adv,}\beta}_{\mu}$, we also investigated a denoising approach and an adversarial approach that cannot retrieve the mse-regularizer in general.
\subsection{Continuity} 

\begin{figure}
    \centering
   \includegraphics[width=0.9\textwidth]{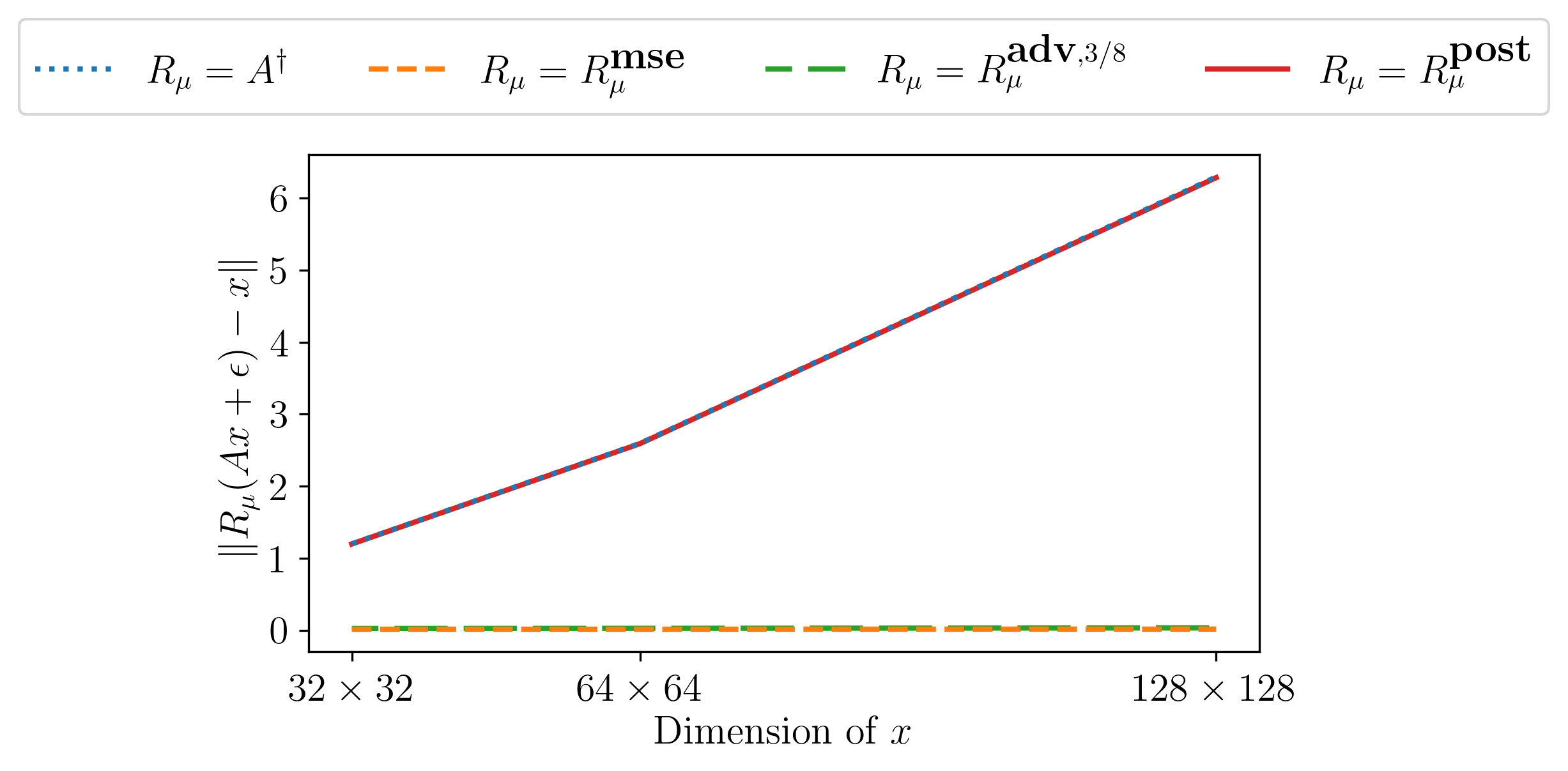}
    \caption{Behavior of reconstruction error for the different approaches applied to different input dimensions. In all cases we choose $\epsilon, \|\epsilon\| = 0.001$ to be a perturbation parallel to $v_{\max}$, which denotes the singular vector that corresponds to the smallest singular value. The training noise which determines the data-driven reconstruction operators is chosen to be white Gaussian noise with $\boldsymbol{\delta}(\mu) = 0.001$.   }
    \label{fig:continuity}
\end{figure}
For all approaches, continuity is equivalent to a lower bound on the decay of the noise coefficients $\Delta_n(\mu)$ by the decay of the data coefficients $\Pi_n^*$, with a factor depending on $\sigma_n$ which differs between the approaches. The weakest condition is obtained for the adversarial approach $R^{\text{adv,}\beta}_{\mu}$, followed by the approaches minimizing the mean squared error $R_{\mu}^{\text{mse}}$, $R_{\mu}^{\text{sc,}\beta}$ and $R_{\mu}^{\text{prox}}$. In all of these cases, it is sufficient if noise and data fulfill the operator-independent requirement
    \begin{align*}
    \Delta_n(\mu) \geq c \,\Pi_n^*\text{,\quad or, \quad} \Tilde{\Delta}_n(\mu) \geq c \,\Pi_n^*\text{, \quad respectively,}
\end{align*}
for almost all $n \in \N$ and a constant $c > 0$, as the singular values $\sigma_n$ accumulate at zero. In case of the post-processing operator $R^{\text{post}}_\mu$ with $\boldsymbol{\delta}(\mu)$, this property is not only not sufficient for continuity, it is further necessary that the data distribution fulfills the operator-dependent requirement
\begin{align*}
    \sigma_n \geq c \, \Pi_n^*
\end{align*}
for almost all $n \in \N$ and a constant $c > 0$. In \cref{fig:continuity}, we visualize the stability or instability of the considered approaches by comparing the reconstruction errors for different input dimensions. In the discrete setting, all of our approaches are mathematically continuous by their linearity. Still, we see that for our specific example, the approaches 
$R_{\mu}^{\text{mse}}$ and $R_{\mu}^{\text{adv,}3/8}$ are clearly more stable than the post-processing approach, $R_{\mu}^{\text{post}}$, which performs as good as naive reconstruction with the pseudo-inverse $A^\dagger$. This observation is in line with the theoretical insight emerging from \cref{lem:cont,lem:cont_post,lem:cont_adv}, stating that the requirements for a $R_{\mu}^{\text{post}}$ to be stable are stronger than for the other approaches we consider (see also \cref{tab:comparison}). Formally, the post-processing approach could be as stable as the other approaches, but in general, that would require exploding training noise.

\subsection{Convergence}

\begin{figure}
    \centering
    \begin{tikzpicture}  
    \node at (0,2) {\small $\Delta_n(\nu) \propto 1$};
    \node at (3.5,2) {\small $\Delta_n(\nu) \propto 1/\sqrt{n}$};
    \node at (7,2) {\small $\Delta_n(\nu) \propto  1/n^4$};
    \node at (0,0)[rotate=90] {
    \includegraphics[scale=0.75]{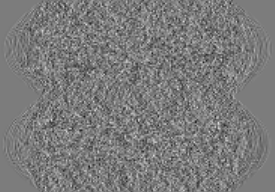}};
    \node at (3.5,0) [rotate=90]{
    \includegraphics[scale=0.75]{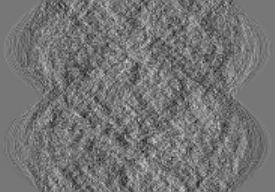}};
    \node at (7,0) [rotate=90]{
    \includegraphics[scale=0.75]{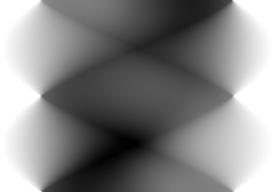}};
    \end{tikzpicture}
    \caption{Examples of noise samples with different decays of $\Delta_n(\nu)$.}
    \label{fig:noise}
\end{figure}
\begin{figure}
    \centering
    \includegraphics[width=\textwidth]{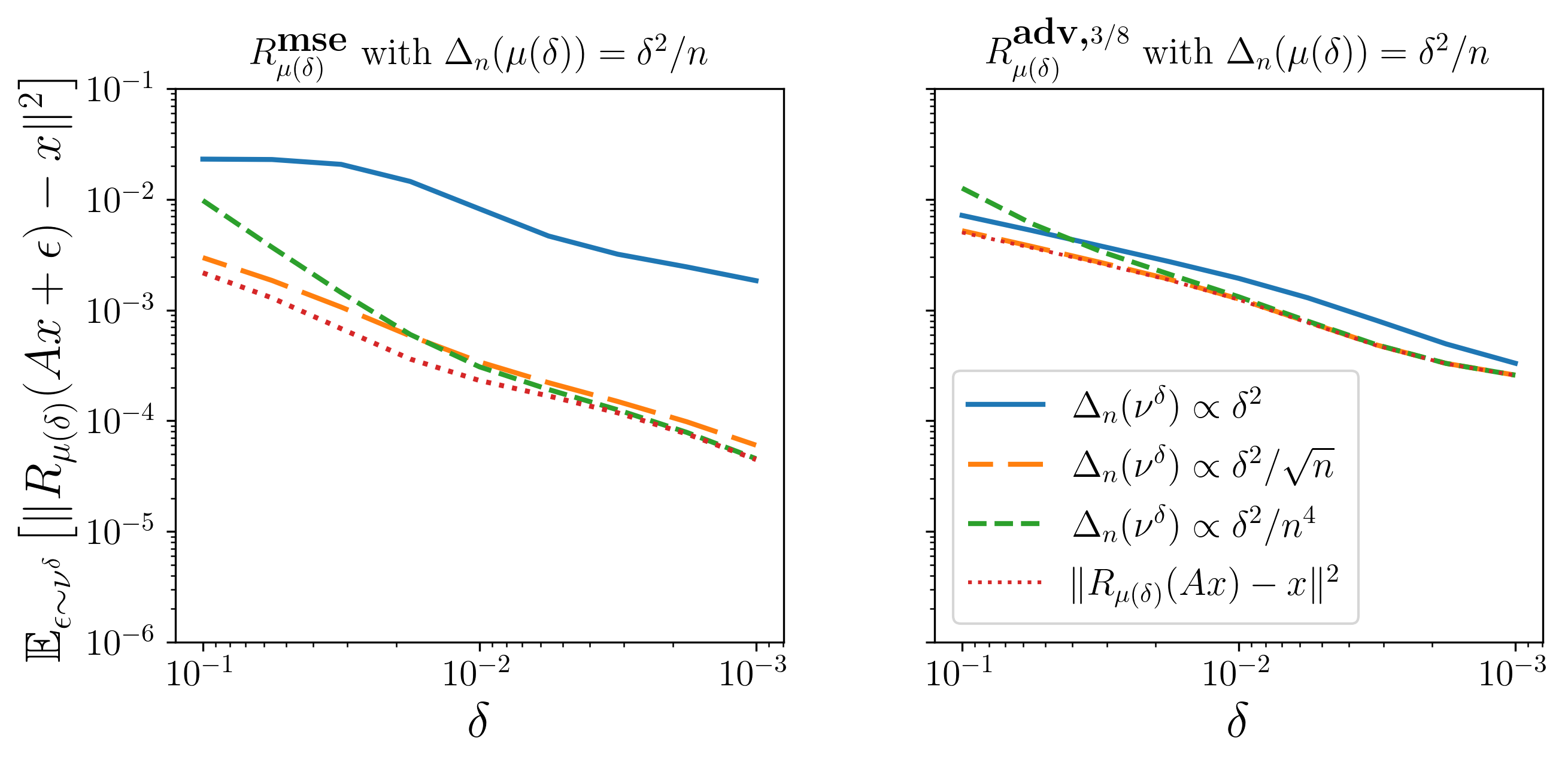}
    \caption{Behavior of the reconstruction error obtained with the supervised and the adversarial approach. We test the performance on measurements corrupted with noise drawn from different kinds of noise models and decaying noise level.}
    \label{fig:convergence_mse_adv}
\end{figure}
\begin{figure}
    \centering
    \includegraphics[width=\textwidth]{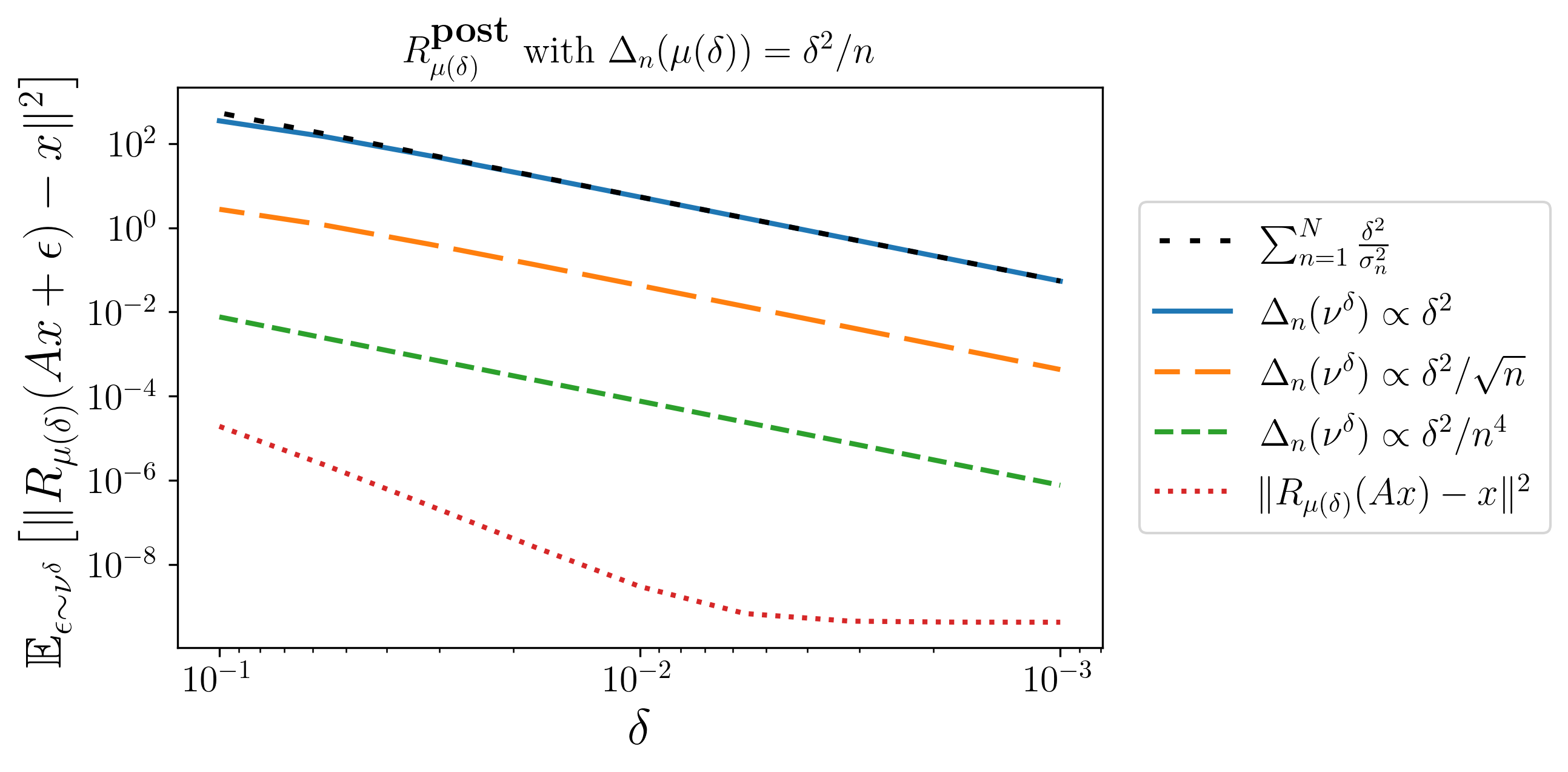}
    \caption{Behavior of the reconstruction error obtained with the post-processing approach. We test the performance on measurements corrupted with noise drawn from different kinds of noise models and decaying noise level.}
    \label{fig:convergence_post}
\end{figure}
In \cref{thm:convergence}, we showed that choosing the training noise $\mu$ to be the real problem noise $\nu$ yields a convergent data-driven regularization method for the supervised and the adversarial approach with source condition. This result easily transfers to the proximal map approach $R_{\mu}^{\text{prox}}$, with the adjustment that instead of $\mu(\nu) = \nu$, we choose $\mu$ such that $\Tilde{\Delta}_n(\mu) = \Delta_n(\nu)$ for all $n \in \N$. For the two other approaches (post-processing and adversarial regularization with gradient penalty), the method obtained by choosing the training noise such that its coefficients coincide with $\Delta_n(\nu)$ is in general not convergent in the sense of \eqref{eq:convergence}. All our convergence results have in common that the noise coefficients of the training noise are required to decay at most as fast as the real noise coefficients. In the case of summable coefficients $\{\Delta_n(\mu)\}_{n\in\N}$ and $\{\Delta_n(\nu)\}_{n\in\N}$ this means that
\begin{align*}
    \E_{\epsilon \sim \nu}\left[\|\epsilon\|^2\right] = \sum_{n \in \N} \Delta_n(\nu) \leq \frac{1}{c}\sum_{n \in \N} \Delta_n(\mu) = \frac{1}{c} \, \E_{\epsilon \sim \mu}\left[\|\epsilon\|^2\right],
\end{align*}
i.e., up to a uniform constant $c$, the variance of the problem noise should be bounded from above by the variance of the training noise. In the general case, we can say that, up to a uniform constant $c$, the problem noise should be weaker than the training noise to obtain a convergent regularization. An illustration of different decays of noise can be found in \cref{fig:noise}. \\
Additional assumptions on the operator and/or the data distribution need to be made for the convergence of $R^{\text{post}}_\mu$ and $R^{\text{adv,}\beta}_{\mu}$. In  \cref{fig:convergence_mse_adv,fig:convergence_post}, we compare the expected reconstruction error that is obtained by reconstructing one noisy data instance with the different approaches. We assume all reconstruction operators to be trained with noise $\mu(\delta)$ that fulfills $\Delta_n(\mu(\delta)) = \delta^2/\sqrt{n}$. We then compute the expected reconstruction error based on test noise drawn from three different noise models and decaying noise level. As it was also shown in the proofs of \cref{thm:convergence,thm:convergprio_mse,thm:conv_post,thm:convadv}, the reconstruction error for each discussed method $R_{\mu}$ can be decomposed into a noise-dependent and a data-dependent term, as
\begin{align*}
     \E_{\epsilon\sim \nu^\delta}\left[\|R_{\mu}(Ax+\epsilon) - x\|^2\right] = \|x_0\|^2 +   \|R_{\mu}(Ax)-x\|^2 +\E_{\epsilon\sim \nu^\delta}\left[\|R_{\mu}\epsilon\|^2\right].
\end{align*}
Therefore, the error is bounded from below by the term 
\begin{align}\label{eq:xbias}
    \|R_{\mu}(Ax)-x\|^2,
\end{align}
which does not depend on the noise. The convergence of \eqref{eq:xbias} to the null-space remainder $\|x_0\|^2$ was shown for all methods without further requirements. Accordingly, the corresponding terms from the numerical examples seem to converge to a value close to zero in all cases. We also see that the data-dependent error decays much faster for the post-processing approach, than for the other approaches. Moreover, most of the error curves for $R^{\text{mse}}_{\mu(\delta)}$ and $R^{\text{adv,} 3/8}_{\mu(\delta)}$ are dominated by \eqref{eq:xbias}, which indicates convergence of the whole error. In general, we see a stronger decay of the error, for a stronger decay of the noise coefficients. In particular, $R^{\text{mse}}_{\mu(\delta)}$ tested on white noise (i.e., $\Delta_n(\nu^\delta) \propto \delta^2$), is clearly dominated by the noise-dependent part. 
This might indicate that the error would not converge to the optimum $\|x_0\|^2$ in a continuum limit of the reconstruction problem. However, we stress that in a finite-dimensional, discretized setting with $N < \infty$ singular values, it holds for the noise-dependent part of the error that
\begin{align}\label{eq:noiseerrorbound}
    \E_{\epsilon\sim \nu^\delta}\left[\|R_{\mu}\epsilon\|^2\right] \leq \sum_{n = 1}^N \frac{\Delta_n(\nu^\delta)}{\sigma_n^2} \leq \boldsymbol{\delta}(\nu^\delta)^2 \cdot c,
\end{align}with $c<\infty$. This automatically yields convergence to zero, as the noise level of the test noise tends to zero. In the continuum limit of an unstable inverse problem, the constant $c$ diverges and thus, the bound becomes trivial. 
This is a useful insight to inspect the numerical convergence rates of $R^{\text{post}}_{\mu(\delta)}$ shown in \cref{fig:convergence_post}: All error curves decay proportional to $\delta^2$, but the decay for testing on white noise coincides with the upper bound \eqref{eq:noiseerrorbound}. This can be a sign for divergence of the error in the infinite dimensional limit. In comparison to the other considered approaches, we observe a slower convergence of the noise-dependent error for all choices of test noise.
The qualitative difference between the obtained reconstructions can be seen in \cref{fig:recos_mse,fig:recos_adv,fig:recos_post}.

\section{Conclusion}
In this chapter we compared the convergence behavior of different learning paradigms for the solution of inverse problems. The definition of convergent data-driven regularizations we gave in \cref{sec:theory} is strongly connected to the classical understanding introduced in \cite{engl1996regularization, benning2018modern}, but takes into account the statistical nature of the problem and practical considerations of users. Using the spectral architecture and a quadratic error functional, we first derived conditions for the convergence of reconstruction operators that are optimized in a supervised approach. Keeping the architecture and the quadratic nature of the error functional, we investigated various other learning strategies: Post-processing, plug-and-play priors by learning a proximal map and adversarial regularizers with and without promoting a source condition. We could show that the optimal mse-regularizer can as well be obtained by all of these approaches, as long as the training noise can be chosen accordingly. Therefore, the main differences of the approaches, that came forward in both theoretical results and numerical examples, can be summarized as follows.

In all cases, \textit{continuity} is equivalent to a decay condition on the ratio of the training noise variance and the data variance. In most cases, the decay has to be bounded from below by the decay of the singular values of the forward operator. The post-processing approach turned out to be the only approach that requires this ratio to decay at least as slow as the \textit{inverse} singular values of the forward operator. Thus, the more stable the forward operator, the harder it is to obtain a continuous reconstruction operator with the post-processing approach. Moreover, in cases where the data variance decays slower than the singular values, a continuous post-processing operator cannot be obtained with training noise of finite noise level. 

\textit{Convergence} of the considered data-driven regularizations is always connected to the decay of the ratio of the training noise variance and the variance of the real noise. For the mse-regularizer, as well as the proximal map approach and the adversarial regularizer with source condition, it is sufficient that the variance of the training noise does not decay faster than the variance of the real noise. For the post-processing approach and the conventional adversarial approach the same has to hold for the \textit{quadratic} variance of the training noise, which is a stronger requirement. Additionally, these two strategies impose a requirement on the singular values of the forward operator. 

In all scenarios, training with white noise yields a convergent data-driven regularization, as long as possible requirements on data and forward operator are fulfilled. To conclude, we note that the general insights about the role of the relationship between noise, data and forward operator we gain from investigating the spectral architecture, are likely to be applicable for more complex architectures and optimization schemes. In particular we expect our statement on the relation between the training noise and the real noise to be a rather universal one.     
\begin{figure}
    \centering
    \begin{tikzpicture}  
    \node at (0,1.5) {\small $\Delta_n(\nu) \propto \delta^2$};
    \node at (2.6,1.5) {\small $\Delta_n(\nu) \propto \delta^2/\sqrt{n}$};
    \node at (5.2,1.5) {\small $\Delta_n(\nu) \propto  \delta^2/n^4$};
    \node at (-1.5,0) [rotate=90] {$\delta=0.1 $};
    \node at (-1.5,-2.6) [rotate=90] {$\delta=0.01 $};
    \node at (-1.5,-5.2) [rotate=90] {$\delta=0.001 $};
    \node at (0,0)[rotate=90] {
    \includegraphics[scale=0.75]{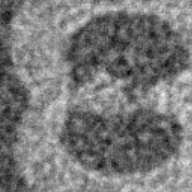}};
    \node at (2.6,0) [rotate=90]{
    \includegraphics[scale=0.75]{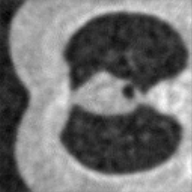}};
    \node at (5.2,0) [rotate=90]{
    \includegraphics[scale=0.75]{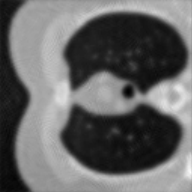}};
    \node at (0,-2.6)[rotate=90] {
    \includegraphics[scale=0.75]{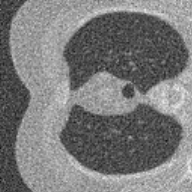}};
    \node at (2.6,-2.6) [rotate=90]{
    \includegraphics[scale=0.75]{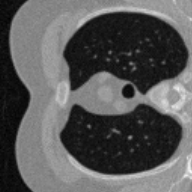}};
    \node at (5.2,-2.6) [rotate=90]{
    \includegraphics[scale=0.75]{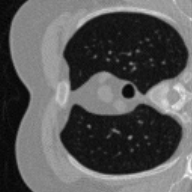}};
    \node at (0,-5.2) [rotate=90]{
    \includegraphics[scale=0.75]{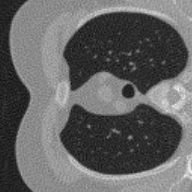}};
    \node at (2.6,-5.2) [rotate=90]{
    \includegraphics[scale=0.75]{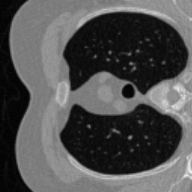}};
    \node at (5.2,-5.2) [rotate=90]{
    \includegraphics[scale=0.75]{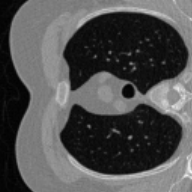}};
    \end{tikzpicture}
    \caption{Reconstructions obtained with $R^{\text{mse}}_{\mu(\delta))}$ where $\Delta_n(\mu(\delta)) = \frac{\delta^2}{\sqrt{n}}$.}
    \label{fig:recos_mse}
\end{figure}
\begin{figure}
    \centering
    \begin{tikzpicture}  
    \node at (0,1.5) {\small $\Delta_n(\nu) \propto \delta^2$};
    \node at (2.6,1.5) {\small $\Delta_n(\nu) \propto \delta^2/\sqrt{n}$};
    \node at (5.2,1.5) {\small $\Delta_n(\nu) \propto  \delta^2/n^4$};
    \node at (-1.5,0) [rotate=90] {$\delta=0.1 $};
    \node at (-1.5,-2.6) [rotate=90] {$\delta=0.01 $};
    \node at (-1.5,-5.2) [rotate=90] {$\delta=0.001 $};
    \node at (0,0)[rotate=90] {
    \includegraphics[scale=0.75]{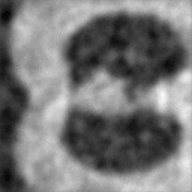}};
    \node at (2.6,0) [rotate=90]{
    \includegraphics[scale=0.75]{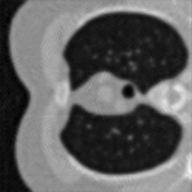}};
    \node at (5.2,0) [rotate=90]{
    \includegraphics[scale=0.75]{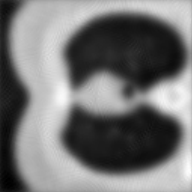}};
    \node at (0,-2.6)[rotate=90] {
    \includegraphics[scale=0.75]{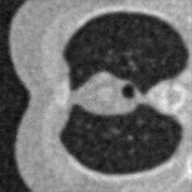}};
    \node at (2.6,-2.6) [rotate=90]{
    \includegraphics[scale=0.75]{images/adv_training0.01.pdf}};
    \node at (5.2,-2.6) [rotate=90]{
    \includegraphics[scale=0.75]{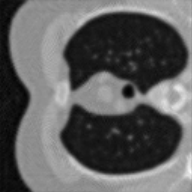}};
    \node at (0,-5.2) [rotate=90]{
    \includegraphics[scale=0.75]{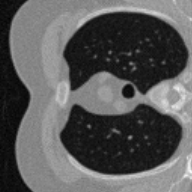}};
    \node at (2.6,-5.2) [rotate=90]{
    \includegraphics[scale=0.75]{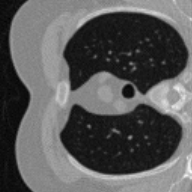}};
    \node at (5.2,-5.2) [rotate=90]{
    \includegraphics[scale=0.75]{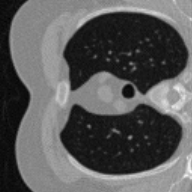}};
    \end{tikzpicture}
    \caption{Reconstructions obtained with $R^{\text{adv}}_{\mu(\delta))}$ where $\Delta_n(\mu(\delta)) = \frac{\delta^2}{\sqrt{n}}$.}
    \label{fig:recos_adv}
\end{figure}
\begin{figure}
    \centering
    \begin{tikzpicture}  
    \node at (0,1.5) {\small $\Delta_n(\nu) \propto \delta^2$};
    \node at (2.6,1.5) {\small $\Delta_n(\nu) \propto \delta^2/\sqrt{n}$};
    \node at (5.2,1.5) {\small $\Delta_n(\nu) \propto  \delta^2/n^4$};
    \node at (-1.5,0) [rotate=90] {$\delta=0.1 $};
    \node at (-1.5,-2.6) [rotate=90] {$\delta=0.01 $};
    \node at (-1.5,-5.2) [rotate=90] {$\delta=0.001 $};
    \node at (0,0)[rotate=90] {
    \includegraphics[scale=0.75]{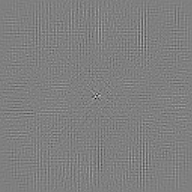}};
    \node at (2.6,0) [rotate=90]{
    \includegraphics[scale=0.75]{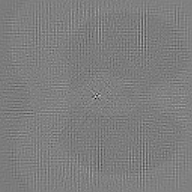}};
    \node at (5.2,0) [rotate=90]{
    \includegraphics[scale=0.75]{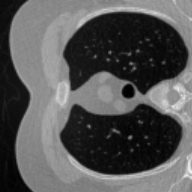}};
    \node at (0,-2.6)[rotate=90] {
    \includegraphics[scale=0.75]{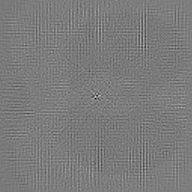}};
    \node at (2.6,-2.6) [rotate=90]{
    \includegraphics[scale=0.75]{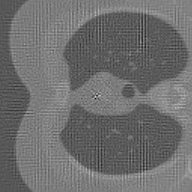}};
    \node at (5.2,-2.6) [rotate=90]{
    \includegraphics[scale=0.75]{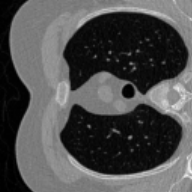}};
    \node at (0,-5.2) [rotate=90]{
    \includegraphics[scale=0.75]{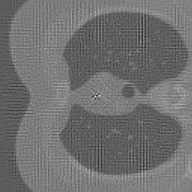}};
    \node at (2.6,-5.2) [rotate=90]{
    \includegraphics[scale=0.75]{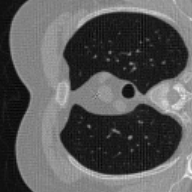}};
    \node at (5.2,-5.2) [rotate=90]{
    \includegraphics[scale=0.75]{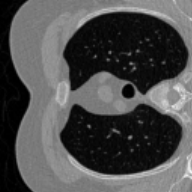}};
    \end{tikzpicture}
    \caption{Reconstructions obtained with $R^{\text{post}}_{\mu(\delta))}$ where $\Delta_n(\mu(\delta)) = \frac{\delta^2}{\sqrt{n}}$.}
    \label{fig:recos_post}
\end{figure}

\section*{Acknowledgement}
This research was supported in part through the Maxwell computational resources operated at Deutsches Elektronen-Synchrotron DESY, Hamburg, Germany. The authors acknowledge support from DESY (Hamburg, Germany),
a member of the Helmholtz Association HGF and from the German Research Foundation, project BU 2327/19-1.

\bibliographystyle{plain}
\bibliography{bibliography}
\end{document}